\documentclass[11pt]{amsart}
\usepackage{amssymb, amsmath, latexsym, amsthm, euler, amsxtra, eufrak}
\usepackage[all]{xy}
\usepackage{mathrsfs}
\usepackage{graphicx}

\usepackage{pstcol, pstricks}
\usepackage{stmaryrd}
\usepackage{hyperref}

\newcommand{\comments}[1]{}

%%%% picture
\newcommand\linebox{\pspolygon[fillstyle=vlines*,hatchsep=2pt,fillcolor=lightgray,linestyle=dotted](0,0)(.5,0)(.5,.5)(0,.5)(0,0)}
\newcommand\lineboxs{\pspolygon[fillstyle=vlines,hatchsep=2pt,linestyle=none](0,0)(.5,0)(.5,.5)(0,.5)(0,0)}
\newcommand\onebox[1]{\pspolygon[fillstyle=solid,fillcolor=lightgray,linestyle=solid,linewidth=.3mm](0,0)(.5,0)(.5,.5)(0,.5)(0,0)\rput(.25,.25){$#1$}}
%%%%%%%% circled box
\newcommand\circled[1]{\pscirclebox[framesep=1pt,linewidth=.3pt]{\footnotesize#1}}
\newcommand\labeledlinebox[1]{\pspolygon[fillstyle=vlines*,hatchsep=2pt,fillcolor=lightgray,linestyle=dotted](0,0)(.5,0)(.5,.5)(0,.5)(0,0)
\rput(.25,.25){\pscirclebox[fillstyle=solid,fillcolor=lightgray,framesep=1pt,linewidth=.3pt]{\footnotesize#1}}}
\newcommand\txcircled[1]{\begin{pspicture}(0,.5)\pscirclebox[framesep=1pt,linewidth=.3pt]{\footnotesize#1}\end{pspicture}}

\newtheorem{theorem}{Theorem}[section]

\newtheorem{proposition}[theorem]{Proposition}
\newtheorem{corrolary}[theorem]{Corollary}

\theoremstyle{definition}
\newtheorem{example}[theorem]{Example}

\newtheorem{convention}[theorem]{Convention}

\newcommand\C{\mathcal C}
\newcommand\E{\mathcal E}
\newcommand\F{\mathcal F}
\newcommand\OO{\mathcal O}

\newcommand\CC{{\mathbb C}}
\newcommand\PP{{\mathbb P}}
\newcommand\ZZ{{\mathbb Z}}

\newcommand\D{{\scriptscriptstyle \operatorname{D}}}
\newcommand\diff{\operatorname{d}}
\newcommand\HH{\operatorname{H}}
\newcommand\M{\operatorname{M}}
\newcommand\Sym{\operatorname{S}}
\newcommand\T{\operatorname{T}}
\newcommand\Hom{{\operatorname{Hom}}}
\newcommand\Aut{\operatorname{Aut}}
\newcommand\End{\operatorname{End}}
\newcommand\Ext{{\operatorname{Ext}}}
\newcommand\GL{\operatorname{GL}}
\newcommand\Spec{\operatorname{Spec}}
\newcommand\rank{\operatorname{rank}}
\newcommand\rest{\operatorname{rest}}

\title{Torus action on the moduli spaces of plane sheaves}

\author{Jinwon Choi}
\address{Department of Mathematics, University of Illinois at Urbana-Champaign,
1409 E Green St., Urbana, IL 61801, United States}
\email{choi29@illinois.edu}

\author{Mario Maican}
\address{Institute of Mathematics of the Romanian Academy,
Calea Grivitei 21, Bucharest 010702, Romania}
\email{mario.maican@imar.ro}

\keywords{Semi-stable sheaves, Equivariant sheaves, Bia{\l}ynicki-Birula decomposition, Torus localization, Betti numbers}
\subjclass[2010]{14D22.}

\begin{document}
\begin{abstract}
We describe the torus fixed locus of the moduli space of stable sheaves with Hilbert polynomial $4m+1$ on $\PP^2$.
We determine the torus representation of the tangent spaces at the fixed points,
which leads to the computation of the Betti and Hodge numbers of the moduli space.
\end{abstract}

\maketitle

\section*{Acknowledgements.}

\noindent
The first author would like to thank Sheldon Katz and Kiryong Chung for useful discussions.
The second author was supported by the
Consiliul Na\c{t}ional al Cercet\u{a}rii \c{S}tiin\c{t}ifice, grant PN II--RU 169/2010 PD--219.

\section{Introduction}

Let $\M = \M_{\PP^2}(4,1)$ denote the moduli space of Gieseker semi-stable sheaves
$\F$ on $\PP^2 = \PP^2(\CC)$ having Hilbert polynomial $P_{\F}(m) = 4m+1$.
According to \cite{lepotier}, $\M$ is an irreducible smooth projective variety of dimension $17$.
Our aim is to classify the torus fixed locus of $\M$ under the natural torus action induced from the torus action on the base space $\PP^2$, which in turn enables us to compute the additive structure of its homology groups with coefficients in $\ZZ$. For this we will use the theory of Bia{\l}ynicki-Birula \cite{bb, bb2, carrell},
which we review in Section \ref{bbtheory}.

More precisely, we will consider the natural action of $T = (\CC^*)^2$
defined as follows: first, $T$ acts on $\PP^2$ by
$(t_1, t_2) \cdot (x_0, x_1, x_2) = (x_0, t_1^{-1} x_1, t_2^{-1} x_2)$;
denote by $\mu_{t} \colon \PP^2 \to \PP^2$ the map of multiplication by $t \in T$.
Now put $t [\F] = [\mu_{t^{-1}}^* \F]$, where $[\F]$ denotes the stable-equivalence class
of the sheaf $\F$.  For this action we will prove the following theorem:

\begin{theorem}
\label{main_theorem}
The fixed point locus of $\M_{\PP^2}(4,1)$ consists of
$180$ isolated points and $6$ one-dimensional components isomorphic to $\PP^1$.
Furthermore, the integral homology of $\M_{\PP^2}(4,1)$ has no torsion and its Poincar{\'e} polynomial is
\begin{multline*}
P_{\M_{\PP^2}(4,1)}(x) = 1+2x^2+6x^4+10x^6+14x^8+15x^{10}+16x^{12}+16x^{14}+16x^{16} \\
+16x^{18}+16x^{20}+16x^{22}+15x^{24}+14x^{26}+10x^{28}+6x^{30}+2x^{32}+x^{34}.
\end{multline*}
\end{theorem}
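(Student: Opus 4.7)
The plan rests on two pillars: the Bia{\l}ynicki-Birula decomposition recalled in Section~\ref{bbtheory}, and the classification of semistable plane sheaves with $P_\F(m)=4m+1$ via locally free Beilinson-type resolutions. Since $\M=\M_{\PP^2}(4,1)$ is smooth and projective, and the fixed locus $\M^T$ decomposes into finitely many smooth connected components $F_i$, a generic one-parameter subgroup $\lambda\colon\CC^*\to T$ produces a cellular decomposition in which each cell is an affine bundle over some $F_i$ of rank $d^+_i$ equal to the number of strictly positive $\lambda$-weights on the normal bundle $N_{F_i/\M}$. Consequently $\M$ carries no odd cohomology, integral homology is torsion free, and the Poincar{\'e} polynomial reads
\[
P_{\M}(x)=\sum_i x^{2d^+_i}\,P_{F_i}(x).
\]

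First I would classify the $T$-fixed points of $\M$: these are the stable equivalence classes of sheaves admitting a $T$-equivariant structure. Every such $\F$ is the cokernel of a $T$-equivariant morphism
\[
0\to\bigoplus \OO(-a_i)\otimes\chi_i\to\bigoplus \OO(-b_j)\otimes\chi'_j\to\F\to 0,
\]
with $\chi_i,\chi'_j$ characters of $T$. The problem reduces to enumerating, for each resolution shape permitted by Hilbert polynomial $4m+1$, the equivariant character twists and the underlying scalar matrix, modulo the action of equivariant automorphisms of the two flanking bundles. I would stratify $\M$ by the cohomological type of $\F$ and run through each stratum in turn, translating the equivariance constraint into explicit combinatorial data. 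Most configurations are rigid and yield isolated fixed points; a short list retains one free modulus sweeping out a $\PP^1$, which should produce the claimed $180$ isolated points and $6$ copies of $\PP^1$.

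Second, at each fixed component I would compute the $T$-representation on the tangent space $\Ext^1(\F,\F)$, or on the normal bundle when $F_i$ is positive-dimensional. Since the resolution of $\F$ is explicit, the standard spectral sequence expresses $\Hom(\F,\F)$ and $\Ext^1(\F,\F)$ as weighted sums of characters of the form $H^0(\OO(b_j-a_i))\otimes\chi_i^{-1}\chi'_j$, which decompose character-by-character. Choosing $\lambda$ generic so that no weight on $\Ext^1(\F,\F)$ restricted to the normal direction vanishes, the count of strictly positive weights gives $d^+_i$.

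Finally, feeding the tallies back into the displayed formula should reproduce the stated polynomial: isolated points contribute monomials $x^{2d^+_i}$ and each $\PP^1$-component contributes $x^{2d^+_i}(1+x^2)$. The main obstacle is step one: ensuring that the census of equivariant semistable sheaves is exhaustive and non-redundant, particularly in the non-locally-free stratum where the support is a non-reduced quartic and the equivariant Beilinson resolutions are more delicate. Step two is mechanical once the fixed points are pinned down, but must be carried out close to two hundred times, which demands a disciplined bookkeeping scheme and a judicious choice of $\lambda$ to keep the weight computation tractable.
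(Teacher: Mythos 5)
Your proposal is correct in outline and coincides with the second of the two methods the paper actually uses (Sections \ref{fixed_locus_2} and \ref{representation_2}): classify the $T$-fixed sheaves as cokernels of $T$-equivariant morphisms in the Dr\'ezet--Maican resolutions $0\to 3\OO(-2)\to 2\OO(-1)\oplus\OO\to\F\to 0$ and $0\to\OO(-3)\oplus\OO(-1)\to 2\OO\to\F\to 0$, stratum by stratum ($\M_0\setminus\M_{01}$, $\M_{01}$, $\M_1$), and read off the tangent weights from the induced $T$-action on $W/{\mathfrak g}$, i.e.\ on the character tableaux $H^0(\OO(b_j-a_i))\otimes\chi_i^{-1}\chi_j'$ minus the weights of the equivariant automorphisms. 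The paper also runs an independent first method (Sections \ref{fixed_locus_1} and \ref{representation_1}) via Kool's combinatorial description of $T$-equivariant sheaves on toric surfaces and a \v{C}ech computation of $\chi(\F,\F)$ \`a la Maulik--Nekrasov--Okounkov--Pandharipande, but that serves as a cross-check; your plan, once the roughly two hundred cases are actually enumerated and tallied, is the paper's proof.
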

The geometry of the moduli space $\M$ has been studied by many authors \cite{choi-chung, drezet-maican, sahin, yuan}. In \cite{katz_gv}, it was conjectured that genus zero Gopakumar-Vafa (or BPS) invariant defined in M-theory is equal to the Euler characteristic of the moduli space $\M_{\PP^2}(r,1)$ up to sign. When $r\le 3$, the moduli spaces are well known by the work of Le Potier \cite{lepotier}. When $r=4$, the conjecture was first checked in \cite{sahin} where the author uses a stratification of the moduli space with respect to the global section spaces. The Poincar{\'e} polynomials of the moduli spaces when $r=4$ and $5$ have been computed in \cite{choi-chung} by a wall-crossing technique in the moduli spaces of $\alpha$-stable pairs, and also in \cite{yuan} by the classification of the semi-stable sheaves carried out in \cite{drezet-maican} and \cite{illinois}. Recently, a B-model calculation in physics computes the Poincar{\'e} polynomial up to $r=7$ \cite[Table 2]{hkk} in terms of the refined BPS indices. Mathematically, this calculation can be considered as a conjecture. More mathematical treatment for the refined BPS index can be found in \cite{ckk}. The Poincar\'{e} polynomial in Theorem \ref{main_theorem} agrees with all these previous works.

We will use two approaches to determine $X^T$.
In Section \ref{fixed_locus_1}, following \cite{kool},
we will regard a $T$-fixed sheaf as a $T$-equivariant sheaf and we will classify all $T$-equivariant sheaves in terms of $T$-representations on each affine open subset.
In Section \ref{fixed_locus_2}, we will use the classification of semi-stable sheaves
on $\PP^2$ with Hilbert polynomial $4m+1$ carried out in \cite{drezet-maican}.

The Poincar\'{e} polynomial can then be computed by analyzing the local structure of the moduli spaces around the fixed locus, that is,
by determining the $T$-action on the tangent spaces at the fixed points. For this, we will use two approaches as well.
In Section \ref{representation_1}, we will use the technique developed in \cite{mnop1}. Using the $T$-representations of sheaves on each affine open subset and the associated \v{C}ech complex, we compute the $T$-representation of the tangent spaces.
In Section \ref{representation_2}, we will exploit the locally closed stratification of $\M_{\PP^2}(4,1)$
found in \cite{drezet-maican}.

The first approach can, in principle, be applied to any non-singular moduli space
$\M_{\PP^2}(r, \chi)$ of semi-stable sheaves on $\PP^2$ with Hilbert polynomial $rm+\chi$,
though, of course, for higher multiplicity the calculations will be considerably more involved.
The second approach can be extended to semi-stable sheaves supported on plane quintics
or sextics, for which the classification has been carried out, cf. \cite{illinois} and \cite{kyoto}.

Theorem \ref{main_theorem} also allows us to compute the Hodge numbers $h^{pq}$ of $\M_{\PP^2}(4,1)$.
According to \cite[Theorem 1]{carrell-lieberman},
for any $\lambda \in N$, $h^{pq} = 0$ if $\vert p-q \vert > \dim(X^{\lambda})$.
Choosing a generic $\lambda$ we see that $h^{pq} = 0$ if $\vert p-q \vert > \dim(X^T)=1$.
We obtain the following:

\begin{proposition}
\label{hodge_numbers}
The Hodge numbers $h^{pq}$, $0 \le p, q \le 17$, of $\M_{\PP^2}(4,1)$ satisfy the relations
\[
h^{pq}=0 \quad \text{if} \quad p \neq q \qquad \text{and} \qquad h^{pp} = b_{2p},
\]
where $b_{2p}$ is the Betti number obtained in Theorem \ref{main_theorem}. 
\end{proposition}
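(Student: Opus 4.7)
The plan is to combine the Carrell--Lieberman bound recalled just before the proposition with the vanishing of the odd Betti numbers computed in Theorem \ref{main_theorem}. The Hodge decomposition gives
\[
b_n = \sum_{p+q=n} h^{pq},
\]
and since each $h^{pq}$ is a non-negative integer, the vanishing $b_n = 0$ for $n$ odd forces $h^{pq} = 0$ whenever $p+q$ is odd. On the other hand, the generic one-parameter subgroup $\lambda$ of $T$ has the same fixed locus as $T$, namely $X^T$, which by Theorem \ref{main_theorem} has dimension $1$. Applying \cite[Theorem 1]{carrell-lieberman} then gives $h^{pq} = 0$ whenever $|p-q| > 1$.

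Combining these two vanishing statements disposes of the off-diagonal Hodge numbers: if $p \ne q$, then either $|p-q| \ge 2$, in which case Carrell--Lieberman applies, or $|p-q| = 1$, in which case $p+q$ is odd and the vanishing of the odd Betti numbers applies. Either way $h^{pq} = 0$. Substituting back into the Hodge decomposition in even degree $n = 2p$, only the diagonal term survives, giving $h^{pp} = b_{2p}$.

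A preliminary check is that the fixed locus of a generic one-parameter subgroup really coincides with $X^T$; this is standard, because $X^T$ is cut out by the vanishing of finitely many characters of $T$ on the $T$-weight decomposition of the normal bundles, and a $\lambda \in N$ avoiding the kernels of these finitely many characters has $X^\lambda = X^T$. Given that and Theorem \ref{main_theorem}, the argument above is essentially mechanical; there is no serious obstacle beyond verifying that the hypotheses of the Carrell--Lieberman theorem are satisfied in our smooth projective setting, which they are since $\M_{\PP^2}(4,1)$ is smooth projective by \cite{lepotier}.
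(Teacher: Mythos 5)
Your proposal is correct and follows essentially the same route as the paper: the Carrell--Lieberman bound $h^{pq}=0$ for $|p-q|>\dim(X^T)=1$, combined with the vanishing of the odd Betti numbers from Theorem \ref{main_theorem} to kill the remaining case $|p-q|=1$, after which $h^{pp}=b_{2p}$ follows from the Hodge decomposition. Your explicit case split and the remark on $X^\lambda = X^T$ for generic $\lambda$ merely spell out steps the paper leaves implicit.
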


\section{A review of the Bia{\l}ynicki-Birula theory}
\label{bbtheory}
As a preliminary, we briefly review the theory of Bia{\l}ynicki-Birula which will be used throughout the paper.
Let $X$ be a smooth projective variety with an action of a torus $T$,
such that the fixed point locus is non-empty.
As usual, we denote by $M$ the group of characters of $T$,
and by $N$ the group of one-parameter subgroups of $T$.
Consider $\lambda \in N$ and the associated $\CC^*$-action on $X$ defined by
$(t, x) \mapsto \lambda(t)\cdot x$.
Let $X_1^\lambda, \ldots, X_r^\lambda$ denote the irreducible components of the
$\CC^*$-fixed point locus $X^\lambda$. They are smooth subvarieties.
We have a \emph{plus decomposition}
\[
X = X_1^{\lambda +} \cup \ldots \cup X_r^{\lambda +}
\]
of $X$ into \emph{plus cells}
\[
X_i^{\lambda +} =\{x\in X \mid \lim_{t\to 0} \lambda(t)\cdot x \in X_i^\lambda \}.
\]
Analogously, we have a \emph{minus decomposition} of $X$ into \emph{minus cells}
\[
X_i^{\lambda -} =\{x\in X \mid \lim_{t \to \infty} \lambda(t)\cdot x \in X_i^\lambda \}.
\]
The plus and minus cells are bundles on $X^{\lambda}$.
More precisely, for each $i$ the restricted tangent bundle ${\T_X}_{\mid X_i^{\lambda}}$
can be decomposed as a direct sum of sub-bundles,
\[
{\T_X}_{\mid X_i^{\lambda}} = \T_i^{+} \oplus \T_i^0 \oplus \T_i^{-},
\]
on which $\CC^*$ acts with positive, zero, respectively negative weights.
Then $X_i^{\lambda +}$ is isomorphic to the bundle space of $\T_i^{+}$ and
$X_i^{\lambda -}$ is isomorphic to the bundle space of $\T_i^{-}$.
Denote $p(i) = \rank(\T_i^{+})$, $n(i) = \rank(\T_i^{-})$.
From the plus and minus decompositions we obtain the Homology Basis Formula \cite[Theorem 4.4]{carrell}:

\begin{theorem}
\label{homology_basis}
For any integer $m$ with $0 \le m \le 2 \dim(X)$, we have a decomposition
\[
\HH_m (X, \ZZ) \simeq \bigoplus_{1 \le i \le r} \HH_{m - 2p(i)}(X_i^{\lambda}, \ZZ)
\simeq \bigoplus_{1 \le i \le r} \HH_{m - 2n(i)}(X_i^{\lambda}, \ZZ).
\]
Thus, the Poincar\'e polynomial $P_X$ of $X$ satisfies the relation
\[
P_X (x) = \sum_{i=1}^r P_{X_i^{\lambda}}(x) x^{2 p(i)}
= \sum_{i=1}^r P_{X_i^{\lambda}}(x) x^{2 n(i)}.
\]
\end{theorem}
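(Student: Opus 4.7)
The plan is to assemble the plus cells into a filtration of $X$ by closed subvarieties, identify each successive quotient via the Thom isomorphism, and force every boundary map in the resulting long exact sequences to vanish.

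First I would choose a $T$-equivariant ample line bundle $L$ on $X$ and, for each component $X_i^\lambda$, let $h(i)$ denote the weight of $\CC^*$ on the fibres of $L$ over $X_i^\lambda$. After perturbing $\lambda$ within $N$ if necessary the heights $h(i)$ are pairwise distinct, and I relabel so that they are strictly increasing. A standard convexity argument comparing the $L$-heights of $x$ and $\lim_{t\to 0}\lambda(t)\cdot x$ then shows that
\[
F_j := \bigcup_{i \le j} X_i^{\lambda +}
\]
is closed in $X$. This produces a filtration $\emptyset = F_0 \subset F_1 \subset \cdots \subset F_r = X$ whose successive differences $F_j \setminus F_{j-1} = X_j^{\lambda +}$ are Zariski-locally trivial complex affine bundles of rank $p(j)$ over $X_j^\lambda$, by the Bia{\l}ynicki-Birula structure theorem recalled in the excerpt.

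Excision together with the Thom isomorphism for the oriented real rank-$2p(j)$ bundle $\T_j^+$ gives
\[
\HH_m(F_j, F_{j-1}, \ZZ) \simeq \HH_{m - 2p(j)}(X_j^\lambda, \ZZ).
\]
The crucial step is to show that the long exact sequences of the pairs $(F_j, F_{j-1})$ collapse into split short exact sequences. I would achieve this by exhibiting, for each $j$, a dual class coming from the opposite cell: the closure $\overline{X_j^{\lambda -}}$ is a closed complex subvariety of $X$ meeting $X_j^{\lambda +}$ transversally along $X_j^\lambda$, so intersection with its fundamental class realises a right inverse to the Thom projection $\HH_m(X) \to \HH_{m - 2p(j)}(X_j^\lambda)$. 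Iterating on $j$ then yields $\HH_m(X, \ZZ) \simeq \bigoplus_i \HH_{m - 2p(i)}(X_i^\lambda, \ZZ)$.

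The corresponding decomposition with $n(i)$ is obtained by repeating the argument for the inverse one-parameter subgroup $t \mapsto \lambda(t^{-1})$: its plus cells are the minus cells of $\lambda$ and its positive tangent ranks at $X_i^\lambda$ are the $n(i)$. The two Poincar{\'e} polynomial identities then follow by taking graded dimensions in $m$. The main obstacle I anticipate is the splitting step above: rigorously identifying $\overline{X_j^{\lambda -}}$ as a cycle that splits the Thom projection requires a careful transversality analysis for the intersections of closures of opposite Bia{\l}ynicki-Birula cells. Once this is in place, the rest of the argument is routine homological algebra.
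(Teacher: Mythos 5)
The paper does not actually prove this statement: it is the classical Homology Basis Formula, quoted directly from Bia{\l}ynicki-Birula--Carrell--McGovern \cite[Theorem 4.4]{carrell}, so there is no in-text argument to measure yours against; you are attempting to reprove a cited result. Your skeleton is the standard one (order the cells by the $\CC^*$-weight of an equivariant ample linearization so that the partial unions $F_j$ are closed, then apply excision and the Thom isomorphism to the successive quotients), and that part is essentially sound --- except that you should not ``perturb $\lambda$'' to separate the heights $h(i)$, since perturbing $\lambda$ changes the fixed locus and the cells; what the height argument really gives is that $\overline{X_j^{\lambda +}}\setminus X_j^{\lambda +}$ meets only cells of strictly smaller height, which already makes any ordering refining the height order filtrable, with no distinctness needed.

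The genuine gap is the splitting step, which you flag as the main obstacle and then do not carry out; unfortunately it is the entire content of the theorem. The proposed mechanism --- capping with the fundamental class of $\overline{X_j^{\lambda -}}$ --- does not work as stated. The closure of a minus cell is in general singular, and its boundary $\overline{X_j^{\lambda -}}\setminus X_j^{\lambda -}$ is a union of other minus cells which can meet $X_j^{\lambda +}$ far from $X_j^{\lambda}$, so the clean intersection along $X_j^{\lambda}$ (which does hold, since the tangent spaces $\T_j^{+}\oplus\T_j^{0}$ and $\T_j^{-}\oplus\T_j^{0}$ span) is not the real difficulty. Worse, when $X_j^{\lambda}$ is positive-dimensional you cannot in general even produce the cycles you want to intersect with: a Borel--Moore class on $X_j^{\lambda -}$ pulled back from $\HH_{m-2n(j)}(X_j^{\lambda},\ZZ)$ extends to $\overline{X_j^{\lambda -}}$ only if a boundary map in the exact sequence of the pair vanishes, and that vanishing is equivalent to the degeneration you are trying to prove. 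A complete argument needs a genuine extra input at this point: the Carrell--Goresky decomposition theorem, a weight/purity argument for the filtration spectral sequence, or the counting argument that combines the spectral-sequence inequality with the localization inequality $\dim \HH_*(X^{\lambda};\mathbb{Q})\le\dim \HH_*(X;\mathbb{Q})$ (together with a Smith-theory refinement to get the integral, torsion-free statement). As written, your proof assumes the conclusion at its only nontrivial step.
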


According to \cite[Lemma 4.1]{carrell}, for generic $\lambda \in N$ we have $X^T = X^{\lambda}$.
In fact, this is true for all $\lambda$ satisfying the condition:
$\langle \lambda, \chi \rangle \neq 0$ for all $\chi \in M$ occurring in the weight decomposition
of any tangent space $\T_{X,x}$ at a fixed point $x \in X^T$.
Thus, in order to determine the additive structure of $\HH_*(X, \ZZ)$, we need to determine $X^T$,
to find the weight decomposition of the tangent spaces at the fixed points and to compute
$p(i)$ (or $n(i)$) for a one-parameter subgroup $\lambda$ satisfying the above condition.

\section{Torus fixed locus I}
\label{fixed_locus_1}

This and the next section are based on \cite[Chapter 3]{choi_thesis}. By \cite[Proposition 4.4]{kool}, a stable sheaf on $\PP^2$ is $T$-fixed if and only if it is $T$-equivariant. In this section, we use a classification of $T$-equivariant sheaves on a toric variety to study the torus fixed locus of the moduli space of sheaves on the projective plane.

Let the torus $T = (\CC^*)^2$ act on $\PP^2$ by
\[
(t_1,t_2)\cdot(x_0,x_1,x_2)=(x_0,t_1^{-1}x_1,t_2^{-1}x_2).
\]
We consider the standard open affine cover $\{ U_{\alpha} \}$, $\alpha=0,1,2$, of $\PP^2$ invariant under the torus action:
\[
U_\alpha=\{(x_0,x_1,x_2)\in \PP^2 \mid x_\alpha\ne 0\}.
\]
Then we may write
\begin{align*}
U_0 &= \Spec  \CC[x,y] \\
U_1 &= \Spec  \CC[x^{-1},x^{-1}y]\\
U_2 &= \Spec \CC[y^{-1},xy^{-1}],
\end{align*}
where the induced $T$-action on each coordinate ring is given by
$
(t_1,t_2)\cdot(x,y)=(t_1 x, t_2 y).
$

Let $R_\alpha$ denote the coordinate ring $\Gamma(U_\alpha)$, and $v_\alpha$,
$w_\alpha$ denote the $T$-characters for the generators of $R_\alpha$, in other words,
\begin{align*}
(v_0,w_0)&=(t_1,t_2), \\
(v_1,w_1)&=(t_1^{-1},t_1^{-1}t_2), \\
(v_2,w_2)&=(t_2^{-1},t_1t_2^{-1}).
\end{align*}
We let $M^\alpha$ be a copy of the character group $M=\Hom(T,\CC^*)\simeq \ZZ^2$
whose elements are expressed with the basis $\{v_\alpha,w_\alpha\}$.
For $m,m'\in M^\alpha$, we say $m'\geq m$
if every component of $m'-m$ is non-negative, in other words, if $m'-m$ is a character of $R_\alpha$.

We draw $M^\alpha$ so that we can encode the gluing conditions easily as follows.
\begin{center}
\begin{pspicture}(0,-.5)(4,4.2)%\psgrid[gridwidth=0.1pt,subgridwidth=0.1pt,gridcolor=red,subgridcolor=green]
%\psset{unit=0.5}
\pspolygon[linestyle=dashed](0,0)(4,0)(0,4)(0,0)
\psline{->}(0,0)(1,0)\psline{->}(0,0)(0,1)
\psline{->}(4,0)(3,0)\psline{->}(4,0)(3,1)
\psline{->}(0,4)(1,3)\psline{->}(0,4)(0,3)
\uput[d](1,0){$^{v_0=(1,0)}$}\uput[dl](0,1){$^{w_0=(0,1)}$}
\uput[d](3,0){$^{v_1=(-1,0)}$}\uput[r](3,1){$^{w_1=(-1,1)}$}
\uput[r](1,3){$^{v_2=(1,-1)}$}\uput[l](0,3){$^{w_2=(0,-1)}$}
\rput(.5,.5){$M^0$}\rput(2.8,.5){$M^1$}\rput(.5,2.8){$M^2$}
\end{pspicture}\end{center}

Let $\F$ be a pure $T$-equivariant sheaf on $\PP^2$. We have a decomposition into
weight spaces
\[
\Gamma(U_{\alpha},\F)=\bigoplus_{m\in M^\alpha}\Gamma(U_{\alpha},\F)_{m}.
\]
Denote the weight space $\Gamma(U_{\alpha},\F)_{m}$ by $F^\alpha(m)$.
Since $\F$ is an $\OO_{\PP^2}$-module, each
$\Gamma(U_{\alpha},\F)$ is an $M^\alpha$-graded $R_{\alpha}$-module.
We can reformulate the $R_{\alpha}$-module structure by the following data:
linear maps  $\chi^\alpha_{m,m'} \colon F^\alpha(m)\to F^\alpha(m')$ for
all $m, m'\in M^i$ with $m'\geq m$ such that
\begin{equation}
\chi^i_{m,m}=1 \quad \text{and} \quad \chi^i_{m,m''}=\chi^i_{m',m''}\circ
\chi^i_{m,m'}.
\end{equation}

The pure one-dimensional $T$-equivariant sheaf $\F$ is supported on the union of three torus fixed lines in $\PP^2$.
They are in a one-to-one correspondence with the collection of weight spaces and linear maps.

In the following theorem, for each $\alpha\in\{1,2,3\}$, let $\beta_1,\beta_2\in\{1,2,3\}\setminus\{\alpha\}$
be such that $v_\alpha^{-1}$ is among the $T$-characters of $ R_{\beta_1}$,
and $w_\alpha^{-1}$ is among the $T$-characters of $ R_{\beta_2}$.
For example, if $\alpha=0$, then $\beta_1=1$ and $\beta_2=2$.

\begin{theorem}[{\cite[Chapter 2]{kool}}]
\label{thm:classifyp2}
The category of pure one-dimensional equivariant sheaves $\F$ on $\PP^2$
is equivalent to the category $\C$ that can be described as follows.
An object of $\C$ is a collection of weight spaces and linear maps between weight spaces
\[
\{F^\alpha(m), \chi^\alpha_{m,m'} \mid m\in M^\alpha, \alpha=1,2,3\},
\]
as described above, which satisfies the following conditions:
\begin{enumerate}
\item   For $i=1,2$, there are integers $A_{\beta_i}$, $A_{\alpha\beta_i}$, and $B_{\alpha\beta_i}$
such that $F^\alpha(m)=0$ unless
    \[ m \in [A_{\beta_1},\infty)\times [A_{\alpha\beta_1},B_{\alpha\beta_1}]\cup
    [A_{\alpha\beta_2},B_{\alpha\beta_2}]\times[A_{\beta_2},\infty). \]
We assume that $A_{\beta_1}\le A_{\alpha\beta_2}$, $A_{\beta_2}\le A_{\alpha\beta_1}$,
$A_{\alpha\beta_i}$ are maximally chosen, and $B_{\alpha\beta_i}$'s are minimally chosen.
\comments{
The set $[A_{\beta_1},\infty)\times [A_{\alpha\beta_1},B_{\alpha\beta_1}]$ or
$[A_{\alpha\beta_2},B_{\alpha\beta_2}]\times[A_{\beta_2},\infty)$ can be an empty set, in which case,
we assume  that $(A_{\beta_1}, A_{\alpha\beta_1},B_{\alpha\beta_1})=(\infty,\infty,\infty)$ or
$(A_{\beta_2}, A_{\alpha\beta_2},B_{\alpha\beta_2})=(\infty,\infty,\infty)$ respectively.}
It is possible that \\
$(A_{\beta_1}, A_{\alpha\beta_1},B_{\alpha\beta_1})=(\infty,\infty,\infty)$ or
$(A_{\beta_2}, A_{\alpha\beta_2},B_{\alpha\beta_2})=(\infty,\infty,\infty)$, which means that the set
$[A_{\beta_1},\infty)\times [A_{\alpha\beta_1},B_{\alpha\beta_1}]$, respectively \\
$[A_{\alpha\beta_2},B_{\alpha\beta_2}]\times[A_{\beta_2},\infty)$ is empty.

\item Assume $(m_1,m_2)\notin [A_{\alpha\beta_2},B_{\alpha\beta_2}]\times [A_{\alpha\beta_1},B_{\alpha\beta_1}]$.
If $(m_1,m_2)$ belongs to $[A_{\beta_1},\infty)\times [A_{\alpha\beta_1},B_{\alpha\beta_1}]$,
then $\chi^\alpha_{(m_1,m_2),(m_1+1,m_2)}$ is injective.
Similarly, if $(m_1,m_2)$ belongs to $[A_{\alpha\beta_2},B_{\alpha\beta_2}]\times[A_{\beta_2},\infty)$,
then $\chi^\alpha_{(m_1,m_2),(m_1,m_2+1)}$ is injective.
Thus, the direct limits
\[
F^{\alpha\beta_1}(m_2):=\displaystyle{\varinjlim_{m_1} F^\alpha(m_1,m_2)} \quad
\text{and} \quad  F^{\alpha\beta_2}(m_1):=\displaystyle{\varinjlim_{m_2} F^\alpha(m_1,m_2)}
\]
are well-defined.
They are required to be finite dimensional vector spaces.
\item For $ m \in [A_{\alpha\beta_2},B_{\alpha\beta_2}]\times [A_{\alpha\beta_1},B_{\alpha\beta_1}]$, the map
\[
\chi^\alpha_{m,(m_1,B_{\alpha\beta_1}+1)}\oplus\chi^\alpha_{m,(B_{\alpha\beta_2}+1,m_2)} \colon
F^\alpha(m)\to F^\alpha(m_1,B_{\alpha\beta_1}+1) \oplus F^\alpha(B_{\alpha\beta_2}+1,m_2)
\]
is injective.
\item For $m_2\in [A_{\alpha\beta_1},B_{\alpha\beta_1}]$,
  \[F^{\alpha\beta_1}(m_2)\simeq F^{\beta_1\alpha}(m_2)\]
  and under this identification, \[\chi^\alpha_{(\infty,m_2),(\infty,m_2+1)}=\chi^{\beta_1}_{(\infty,m_2),(\infty,m_2+1)},\] where
  $\chi^\alpha_{(\infty,m_2),(\infty,m_2+1)}=\displaystyle{\varinjlim_{m_1}
  \chi^\alpha_{(m_1,m_2),(m_1,m_2+1)}}$.

  An analogous statement holds for $\beta_2$.
\end{enumerate}
A morphism
\[
\phi\colon\{F^\alpha(m), \chi^\alpha_{m,m'}\}\to \{G^\alpha(m),
\lambda^\alpha_{m,m'}\}
\] in  $\C$ is a collection of linear maps
$\phi^\alpha(m)\colon F^\alpha(m)\to G^\alpha(m)$ which commute with
$\chi^\alpha$ and $\lambda^\alpha$ such that
\[
\phi^{\alpha\beta_1}(m_2)=\phi^{\beta_1\alpha}(m_2)
\quad \text{and} \quad \phi^{\alpha\beta_2}(m_1)=\phi^{\beta_2\alpha}(m_1),
\]
with obvious notations.
\end{theorem}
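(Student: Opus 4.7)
The plan is to exploit the fact that $\PP^2$ is a toric variety and reduce the classification to a purely combinatorial description on each affine chart, followed by a gluing argument. First, I would recall that for a smooth toric variety, the category of $T$-equivariant quasi-coherent sheaves on an affine chart $U_\alpha=\Spec R_\alpha$ is equivalent to the category of $M^\alpha$-graded $R_\alpha$-modules. Such a module is the same as a collection of weight spaces $F^\alpha(m)$ for $m\in M^\alpha$ together with multiplication maps by the ring generators $v_\alpha,w_\alpha$; composing these gives the maps $\chi^\alpha_{m,m'}$ for every $m'\geq m$, and commutativity of multiplication is captured exactly by the cocycle identity $\chi^\alpha_{m,m''}=\chi^\alpha_{m',m''}\circ\chi^\alpha_{m,m'}$. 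This reduction sets up the basic data $(F^\alpha,\chi^\alpha)$ chart by chart.

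Next, I would translate the purity and support conditions. A pure one-dimensional $T$-equivariant sheaf on $\PP^2$ must be supported on a union of torus-invariant divisors, i.e. a subcollection of the three coordinate lines $\{x_\alpha=0\}$. Restricted to $U_\alpha$, this forces the weight spaces to be concentrated on the two $M^\alpha$-strips along the directions of the two lines through the fixed point, giving condition (1); the widths $B_{\alpha\beta_i}-A_{\alpha\beta_i}+1$ are then the generic ranks along each branch. Purity along each branch (no one-dimensional torsion subsheaf supported only in the fiber over the fixed point) corresponds precisely to the injectivity in condition (2): multiplication by the generator pointing along the branch must be injective outside the central rectangle. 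The remaining purity condition — that no zero-dimensional torsion is supported at the fixed point where two branches meet — is a genuinely two-variable statement and is encoded in condition (3): a class in the corner rectangle dies only if both of its images in the two adjacent "stable" quadrants vanish.

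Finally, I would establish the gluing condition (4). Each intersection $U_\alpha\cap U_{\beta_i}$ is obtained from $U_\alpha$ by inverting the monomial pointing along the shared branch, and under the corresponding identification of character lattices, the localized graded module is exactly the direct limit $F^{\alpha\beta_i}(m_2)$. Agreement of the two localizations, as $M^\alpha$-graded and $M^{\beta_i}$-graded modules, is precisely the identification $F^{\alpha\beta_i}(m_2)\simeq F^{\beta_i\alpha}(m_2)$ together with compatibility of the transition maps in the transverse direction. Functoriality of each step then yields the category equivalence and the description of morphisms. The main obstacle is to verify that the list (1)--(4) is \emph{exactly} right: neither too permissive (allowing modules whose associated sheaf acquires embedded points) nor too restrictive. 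The subtle point is condition (3), which cannot be tested one branch at a time and requires examining the combined behavior of both multiplication maps on the corner rectangle $[A_{\alpha\beta_2},B_{\alpha\beta_2}]\times[A_{\alpha\beta_1},B_{\alpha\beta_1}]$; showing that this injectivity, together with (2), rules out all zero-dimensional associated primes is the heart of the argument.
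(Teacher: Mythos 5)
The paper does not actually prove this theorem: it is imported verbatim from Kool's work, where the argument proceeds exactly as you outline --- the equivalence (due to Perling) between $T$-equivariant quasi-coherent sheaves on an affine toric chart and $M^\alpha$-graded $R_\alpha$-modules, the translation of purity of the one-dimensional sheaf into the injectivity conditions (2)--(3), and gluing over the localized overlaps $U_\alpha\cap U_{\beta_i}$ to obtain (4). Your sketch is a faithful reconstruction of that route; the only points left implicit are the coherence/finite-generation input that yields the lower bounds $A_{\beta_i}$ and the finite widths in condition (1), and the verification that (2) together with (3) is not merely necessary but also sufficient to exclude all zero-dimensional associated points, which you correctly identify as the delicate step.
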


By Theorem \ref{thm:classifyp2}, the dimensions of the weight spaces
of pure equivariant sheaves must satisfy the following conditions:
\begin{enumerate}
\item The dimension of the weight space at position $(m_1,m_2)$
is at least the dimensions of the weight spaces at positions
$(m_1-1,m_2)$ and $(m_1,m_2-1)$.
\item Moreover, if $(m_1,m_2)\in [A_{\alpha\beta_2},B_{\alpha\beta_2}]\times [A_{\alpha\beta_1},B_{\alpha\beta_1}]$,
the dimension of the weight space at the position $(m_1,m_2)$
is at most the sum of the dimensions of the weight spaces at positions
    \[(B_{\alpha\beta_2}+1,m_2) \quad \text{and} \quad (m_1,B_{\alpha\beta_1}+1).\]
\end{enumerate}
We will refer this condition as condition ($\ast$).

We illustrate an equivariant sheaf by putting boxes on $M^\alpha$
labeled by the dimensions of the corresponding weight spaces.
By identifying the asymptotic weight spaces, we consider a sheaf as a collection of strips.

For a convenience in illustration, we consider a box in $M^\alpha$ corresponds to
the lattice point of its corner towards the origin of $M^\alpha$.
\begin{convention}
A box in $M^0$, $M^1$, $M^2$ respectively corresponds
to the lattice point of its lower left corner, lower right corner, and upper left corner respectively.
\end{convention}

The torus fixed stable sheaves with Hilbert polynomial $dm+\chi$ where $d\le 3$ are described in \cite[Section 2.4]{koolthesis}.
In what follows, we will describe stable $T$-equivariant sheaves with Hilbert polynomial $4m+1$.

\subsection{Case 1. Sheaves supported on a line}

If the sheaf is supported on a line, the problem is the same as
the problem on local $\PP^1$ with $k=1$ studied in \cite{choip1}.
By the discussion in \cite[Section 5.4 and (16)]{choip1},
we have $7$ equivariant sheaves supported on a fixed line.
Since there are three $T$-invariant lines, the contribution from sheaves of this type is $21$.
Examples of $T$-equivariant sheaves supported on a line are depicted as follows.
\medskip

%\begin{figure}
\begin{center}
\begin{pspicture}(0,-1)(12,3)%\psgrid[gridwidth=0.1pt,subgridwidth=0.1pt,gridcolor=red,subgridcolor=green]
\rput(0.5,0){
\pspolygon[fillstyle=solid,fillcolor=lightgray,linestyle=none](0,0.5)(4,.5)(4,1.5)(4.5,1.5)(4.5,2.5)(0,2.5)(0,0)
\rput(4,1){\linebox}
\rput(4.5,2){\linebox}
\rput(-.5,2){\linebox}
\psline[linewidth=.3mm](0,.5)(4,.5)
\rput(0,.5){\psline[linewidth=.3mm](0,.5)(4,.5)}
\rput(0,1){\psline[linewidth=.3mm](0,.5)(4.5,.5)}
\rput(0,1.5){\psline[linewidth=.3mm](0,.5)(4.5,.5)}
\rput(0,2){\psline[linewidth=.3mm](0,.5)(4.5,.5)}
\psline[linewidth=.3mm](4,.5)(4,1.5)
\psline[linewidth=.3mm](4.5,1.5)(4.5,2.5)
\psline[linewidth=.3mm](0,.5)(0,2.5)
\rput(2,.75){$1$}
\rput(2,1.25){$1$}
\rput(2,1.75){$1$}
\rput(2,2.25){$1$}
}
%coordinate axis
\psline[linestyle=dashed]{->}(0,.5)(1.5,.5)\psline[linestyle=dashed]{->}(5,.5)(3.5,.5)
\psline[linestyle=dashed]{->}(0.5,0)(.5,3)
\psline[linestyle=dashed]{->}(4.5,0)(4.5,3)
%\rput(2.3,2.7){$U_{\sigma_1}$}\rput(3.8,2.7){$U_{\sigma_2}$}
\rput(1.5,0.2){$^{(1,0)}$}\rput(0.1,2.7){$^{(0,1)}$}
\rput(3.5,0.2){$^{(-1,0)}$}\rput(5,2.7){$^{(-1,1)}$}
%
%%%%%%%%%% second diagram
\rput(7,0){
\pspolygon[fillstyle=solid,fillcolor=lightgray,linestyle=none](0,0.5)(4,.5)(4,1.5)(4.5,1.5)(4.5,2)(-.5,2)(-.5,1.5)(0,1.5)(0,0)
\psline[linewidth=.3mm](0,.5)(4,.5)
\rput(0,.5){\psline[linewidth=.3mm](0,.5)(4,.5)}
\rput(0,1){\psline[linewidth=.3mm](0,.5)(4.5,.5)}
\rput(0,1.5){\psline[linewidth=.3mm](0,.5)(4.5,.5)}
\psline[linewidth=.3mm](4,.5)(4,1.5)
\psline[linewidth=.3mm](0,.5)(0,1.5)
\rput(4,1.5){\onebox{1}}\rput(-.5,1.5){\onebox{1}}
\rput(2,.75){$1$}
\rput(2,1.25){$1$}
\rput(2,1.75){$2$}
}
%coordinate axis
\rput(6.5,0){
\psline[linestyle=dashed]{->}(0,.5)(1.5,.5)\psline[linestyle=dashed]{->}(5,.5)(3.5,.5)
\psline[linestyle=dashed]{->}(0.5,0)(.5,3)
\psline[linestyle=dashed]{->}(4.5,0)(4.5,3)
%\rput(2.3,2.7){$U_{\sigma_1}$}\rput(3.8,2.7){$U_{\sigma_2}$}
\rput(1.5,0.2){$^{(1,0)}$}\rput(0.1,2.7){$^{(0,1)}$}
\rput(3.5,0.2){$^{(-1,0)}$}\rput(5,2.7){$^{(-1,1)}$}
}%% second diagram end
\rput(2.5,-.7){(a) type (1,1,1,1)}
\rput(9,-.7){(b) type (1,1,2)}
\end{pspicture}
\end{center}
%\caption{Degree 4 sheaves supported on one line}\label{fig:supported1}
%\end{figure}

In (a), since the sheaf is stable, its quotient has Euler characteristic greater than $1$.
Recall that $C_4$ denotes fourth order thickening of $\PP^1$ in its normal direction.
The Euler characteristic of strips in the sheaf $\OO_{C_4}$ are $1,0,-1,-2$.
Thus, by the stability condition, one more box must be added to the third row of the sheaf $\OO_{C_4}$
and this forces another box on the fourth row by condition ($\ast$).
There are two ways to add these two boxes, either as shown in the picture or on the opposite side.
Since the Euler characteristic of each strip is now $1,0,0,$ and $-1$, we have one more box to be added.
The boxes with diagonal lines show three possible ways to add the last box.
Therefore, we get six $T$-equivariant sheaves of these type.

Consider sheaves as in (b). Since the asymptotic weight space of the third row is two-dimensional,
we need to specify the images of one-dimensional weight spaces at each end.
By stability, their images must be linearly independent,
hence we may assume they are (1,0) and (0,1).
It is easy to see this sheaf is stable and is of Euler characteristic $1$.

The sheaves of other types can be easily seen to be decomposable.
Hence, these are all stable sheaves supported on an irreducible line.

\subsection{Case 2. Sheaves supported on the union of two lines}

First we consider the case where all asymptotic weight spaces are one-dimensional.
The scheme theoretic support of the sheaf is either
the union of a triple line and a simple line or the union of two double lines.
We analyze the former case as shown in the following picture.
\medskip

\begin{center}
\begin{pspicture}(0,-6.5)(12,5)%\psgrid[gridwidth=0.1pt,subgridwidth=0.1pt,gridcolor=red,subgridcolor=green]
\rput(0.5,0){
\rput(0,4){\labeledlinebox{1}}\rput(0,4.5){\labeledlinebox{2}}
\rput(4,.5){\labeledlinebox{5}}\rput(4,1){\labeledlinebox{4}}\rput(4.5,1.5){\labeledlinebox{3}}
\pspolygon[fillstyle=solid,fillcolor=lightgray,linewidth=.3mm](0,0.5)(4,.5)(4,1.5)(4.5,1.5)(4.5,2)(0,2)(0,0.5)
\pspolygon[fillstyle=solid,fillcolor=lightgray,linewidth=.3mm](0,0.5)(.5,.5)(.5,4)(0,4)(0,0.5)
\rput(0,1.5){\onebox{1}}\rput(0,1){\onebox{1}}\rput(0,.5){\onebox{1}}
\psline[linewidth=.3mm](0,.5)(4,.5)
\rput(0,.5){\psline[linewidth=.3mm](0,.5)(4,.5)}
\rput(0,1){\psline[linewidth=.3mm](0,.5)(4,.5)}
\rput(0,1.5){\psline[linewidth=.3mm](0,.5)(4,.5)}
\rput(2,.75){$1$}
\rput(2,1.25){$1$}
\rput(2,1.75){$1$}
\rput(.25,3){$1$}
%coordinate axis
\psline[linestyle=dashed]{->}(-.5,.5)(1,.5)\psline[linestyle=dashed]{->}(4.5,.5)(3,.5)
\psline[linestyle=dashed]{->}(0,0)(0,2.5)
\psline[linestyle=dashed]{->}(4,0)(4,2.5)
%\rput(2.3,2.7){$U_{\sigma_1}$}\rput(3.8,2.7){$U_{\sigma_2}$}
\rput(1,0.2){$^{(1,0)}$}\rput(-0.4,2.2){$^{(0,1)}$}
\rput(3,0.2){$^{(-1,0)}$}\rput(4.5,2.2){$^{(-1,1)}$}
\psline[linestyle=dashed]{->}(-0.5,4)(1,4)\psline[linestyle=dashed]{->}(0,4.5)(0,3)
\rput(1,4.2){$^{(1,-1)}$}\rput(-0.5,3.2){$^{(0,-1)}$}
}
\rput(7.5,0){
\rput(0,4){\labeledlinebox{1}}\rput(4,1){\labeledlinebox{5}}\rput(4,1.5){\labeledlinebox{3}}\rput(-.5,1.5){\labeledlinebox{2}}
\pspolygon[fillstyle=solid,fillcolor=lightgray,linewidth=.3mm](0,0.5)(4,.5)(4,2)(0,2)(0,0.5)
\pspolygon[fillstyle=solid,fillcolor=lightgray,linewidth=.3mm](0,0.5)(.5,.5)(.5,4)(0,4)(0,0.5)
\rput(0,1.5){\onebox{2}}\rput(0,1){\onebox{1}}\rput(0,.5){\onebox{1}}
\rput(0,1){\lineboxs}
\rput(-.33,.88){\circled{4}}\psline{->}(-.15,1)(.15,1.15)
\psline[linewidth=.3mm](0,.5)(4,.5)
\rput(0,.5){\psline[linewidth=.3mm](0,.5)(4,.5)}
\rput(0,1){\psline[linewidth=.3mm](0,.5)(4,.5)}
\rput(0,1.5){\psline[linewidth=.3mm](0,.5)(4,.5)}
\rput(2,.75){$1$}
\rput(2,1.25){$1$}
\rput(2,1.75){$1$}
\rput(.25,3){$1$}
%coordinate axis
\psline[linestyle=dashed]{->}(-.5,.5)(1,.5)\psline[linestyle=dashed]{->}(4.5,.5)(3,.5)
\psline[linestyle=dashed]{->}(0,0)(0,2.5)
\psline[linestyle=dashed]{->}(4,0)(4,2.5)
%\rput(2.3,2.7){$U_{\sigma_1}$}\rput(3.8,2.7){$U_{\sigma_2}$}
\rput(1,0.2){$^{(1,0)}$}\rput(-0.4,2.2){$^{(0,1)}$}
\rput(3,0.2){$^{(-1,0)}$}\rput(4.5,2.2){$^{(-1,1)}$}
\psline[linestyle=dashed]{->}(-0.5,4)(1,4)\psline[linestyle=dashed]{->}(0,4.5)(0,3)
\rput(1,4.2){$^{(1,-1)}$}\rput(-0.5,3.2){$^{(0,-1)}$}
}
\rput(4,-6){
\rput(-.5,1){\labeledlinebox{2}}\rput(-.5,4){\labeledlinebox{1}}%\rput(-.5,4.5){\labeledlinebox{2}}
\pspolygon[fillstyle=solid,fillcolor=lightgray,linewidth=.3mm](0,0.5)(4,.5)(4,2)(0,2)(0,0.5)
\pspolygon[fillstyle=solid,fillcolor=lightgray,linewidth=.3mm](-.5,1.5)(0,1.5)(0,4)(-.5,4)(-.5,1.5)
%\rput(-.5,1.5){\onebox{1}}
\psline[linewidth=.3mm](0,.5)(4,.5)
\rput(0,.5){\psline[linewidth=.3mm](0,.5)(4,.5)}
\rput(0,1){\psline[linewidth=.3mm](0,.5)(4,.5)}
\rput(0,1.5){\psline[linewidth=.3mm](0,.5)(4,.5)}
\rput(2,.75){$1$}
\rput(2,1.25){$1$}
\rput(2,1.75){$1$}
\rput(-.25,3){$1$}
%coordinate axis
\psline[linestyle=dashed]{->}(-.5,.5)(1,.5)\psline[linestyle=dashed]{->}(4.5,.5)(3,.5)
\psline[linestyle=dashed]{->}(0,0)(0,2.5)
\psline[linestyle=dashed]{->}(4,0)(4,2.5)
%\rput(2.3,2.7){$U_{\sigma_1}$}\rput(3.8,2.7){$U_{\sigma_2}$}
\rput(1,0.2){$^{(1,0)}$}\rput(0.4,2.2){$^{(0,1)}$}
\rput(3,0.2){$^{(-1,0)}$}\rput(4.5,2.2){$^{(-1,1)}$}
\psline[linestyle=dashed]{->}(-0.5,4)(1,4)\psline[linestyle=dashed]{->}(0,4.5)(0,3)
\rput(1,4.2){$^{(1,-1)}$}\rput(0.5,3.2){$^{(0,-1)}$}
}
\rput(2.5,-.25){(c)}\rput(9.5,-.25){(d)}\rput(6,-6.25){(e)}
\end{pspicture}
\end{center}

As before, since the sheaf is stable, the third row must contain one more box than the structure sheaf.
There are three possible ways (c), (d), and (e) as shown in the above picture.

The Euler characteristics of (c) and (d) are $-1$, and that of (e) is zero.
Thus, we need to add two more boxes to (c) and (d) and one more box to (e).
The possible places for the additional boxes are shown as labeled boxes with diagonal lines.
Adding a box to an existing one-dimensional weight space means the increase of its dimension to two.
The resulting weight space configurations must satisfy the condition ($\ast$) and the stability condition.
Here are the lists of all possible ways to add two more boxes for each case:
\begin{align*}
  (\textrm{c})&\colon \{\txcircled{1},\txcircled{2}\},\{\txcircled{1},\txcircled{3}\},
  \{\txcircled{1},\txcircled{4}\},\{\txcircled{3},\txcircled{4}\},\{\txcircled{4},\txcircled{5}\},\\
  (\textrm{d})&\colon   \{\txcircled{1},\txcircled{2}\},\{\txcircled{1},\txcircled{3}\},
  \{\txcircled{3},\txcircled{5}\},\{\txcircled{2},\txcircled{4}\},\{\txcircled{3},\txcircled{4}\},\\
  (\textrm{e})&\colon  \{\txcircled{1}\},\{\txcircled{2}\}. \\
\end{align*}

For example, in (d), $\{\txcircled{1},\txcircled{4}\}$ or $\{\txcircled{2},\txcircled{3}\}$
are not allowed by stability, because the subsheaf generated by each of them has Euler characteristic $1$.

Therefore, we have $6\times (5+5+2)=72$ equivariant sheaves of this type.
\medskip

Next, we consider the case where the scheme theoretic support is a union of two double lines.

\medskip

\begin{center}
\begin{pspicture}(0,-.5)(12.5,5)%\psgrid[gridwidth=0.1pt,subgridwidth=0.1pt,gridcolor=red,subgridcolor=green]
\rput(0.5,0){
\rput(4,0){\rput(0,1){\labeledlinebox{4}}\rput(0.5,1){\labeledlinebox{5}}\rput(0,.5){\labeledlinebox{6}}}
\rput(0,4){\rput(0,0){\labeledlinebox{3}}\rput(0.5,0){\labeledlinebox{1}}\rput(0.5,.5){\labeledlinebox{2}}}
\pspolygon[fillstyle=solid,fillcolor=lightgray,linewidth=.3mm](0,0.5)(4,.5)(4,1.5)(0,1.5)(0,0.5)
\pspolygon[fillstyle=solid,fillcolor=lightgray,linewidth=.3mm](0,0.5)(.5,.5)(.5,4)(0,4)(0,0.5)
\rput(0.5,0){\pspolygon[fillstyle=solid,fillcolor=lightgray,linewidth=.3mm](0,0.5)(.5,.5)(.5,4)(0,4)(0,0.5)}
\rput(0,.5){\psline[linewidth=.3mm](0,.5)(4,.5)}
\rput(0,.5){\onebox{1}}\rput(0,1){\onebox{1}}\rput(0.5,1){\onebox{1}}\rput(0.5,.5){\onebox{1}}
\rput(0,1){\lineboxs}\rput(0.5,1){\lineboxs}\rput(0.5,.5){\lineboxs}
\rput(1.25,1.75){\circled{7}}\psline{->}(1.1,1.6)(.85,1.35)
\rput(.5,.2){\circled{8}}\psline{->}(.55,.35)(.6,.65)
\rput(-.3,1.25){\circled{9}}\psline{->}(-.1,1.25)(.2,1.25)
\rput(2,.75){$1$}
\rput(2,1.25){$1$}
\rput(.25,2.5){$1$}
\rput(.75,2.5){$1$}
%coordinate axis
\psline[linestyle=dashed]{->}(-.5,.5)(1,.5)\psline[linestyle=dashed]{->}(4.5,.5)(3,.5)
\psline[linestyle=dashed]{->}(0,0)(0,2)
\psline[linestyle=dashed]{->}(4,0)(4,2)
%\rput(2.3,2.7){$U_{\sigma_1}$}\rput(3.8,2.7){$U_{\sigma_2}$}
\rput(1,0.2){$^{(1,0)}$}\rput(-0.4,1.7){$^{(0,1)}$}
\rput(3,0.2){$^{(-1,0)}$}\rput(4.5,1.7){$^{(-1,1)}$}
\psline[linestyle=dashed]{->}(-0.5,4)(1,4)\psline[linestyle=dashed]{->}(0,4.5)(0,3)
\rput(1.5,4.2){$^{(1,-1)}$}\rput(-0.5,3.2){$^{(0,-1)}$}
}
\rput(7.5,0){
\rput(.5,0){\pspolygon[fillstyle=solid,fillcolor=lightgray,linewidth=.3mm](0,0.5)(4,.5)(4,1.5)(0,1.5)(0,0.5)}
\rput(0,0.5){\pspolygon[fillstyle=solid,fillcolor=lightgray,linewidth=.3mm](0,0.5)(.5,.5)(.5,4)(0,4)(0,0.5)}
\rput(0.5,0.5){\pspolygon[fillstyle=solid,fillcolor=lightgray,linewidth=.3mm](0,0.5)(.5,.5)(.5,4)(0,4)(0,0.5)}
%\pspolygon[fillstyle=solid,fillcolor=white,linewidth=.3mm](0,0.5)(.5,.5)(.5,1)(0,1)(0,0.5)
\rput(0.5,.5){\psline[linewidth=.3mm](0,.5)(4,.5)}
\rput(0,1){\onebox{1}}\rput(0.5,1){\onebox{1}}\rput(0.5,.5){\onebox{1}}
\rput(2,.75){$1$}
\rput(2,1.25){$1$}
\rput(.25,2.5){$1$}
\rput(.75,2.5){$1$}
%coordinate axis
\psline[linestyle=dashed]{->}(-.5,.5)(1,.5)\psline[linestyle=dashed]{->}(4.5,.5)(3,.5)
\psline[linestyle=dashed]{->}(0,0)(0,2)
\psline[linestyle=dashed]{->}(4,0)(4,2)
%\rput(2.3,2.7){$U_{\sigma_1}$}\rput(3.8,2.7){$U_{\sigma_2}$}
\rput(1,0.2){$^{(1,0)}$}\rput(-0.4,1.7){$^{(0,1)}$}
\rput(3,0.2){$^{(-1,0)}$}\rput(4.5,1.7){$^{(-1,1)}$}
\psline[linestyle=dashed]{->}(-0.5,4)(1,4)\psline[linestyle=dashed]{->}(0,4.5)(0,3)
\rput(1.5,4.2){$^{(1,-1)}$}\rput(-0.5,3.2){$^{(0,-1)}$}
}
\rput(2.5,-.25){(f)}\rput(9.5,-.25){(g)}
\end{pspicture}
\end{center}

If we remove one box from the structure sheaf as in (g), the stability forces boxes on the other sides.
By condition ($\ast$), the sheaf in (g) is the only possible one.
In (f), we need to add three more boxes to the structure sheaf.
As before, possible places are shown by labeled boxes with diagonal lines.
There are 12 possible ways to add these three boxes without violating the stability as follows:
\begin{align*}
  (\textrm{f})\colon &\{\txcircled{1},\txcircled{2},\txcircled{3}\},\{\txcircled{1},\txcircled{2},\txcircled{4}\},
  \{\txcircled{1},\txcircled{3},\txcircled{4}\},  \{\txcircled{1},\txcircled{4},\txcircled{5}\},
  \{\txcircled{1},\txcircled{4},\txcircled{6}\},   \{\txcircled{4},\txcircled{5},\txcircled{6}\},\\
  &  \{\txcircled{1},\txcircled{3},\txcircled{7}\},  \{\txcircled{1},\txcircled{4},\txcircled{7}\},
  \{\txcircled{4},\txcircled{6},\txcircled{7}\},  \{\txcircled{1},\txcircled{7},\txcircled{9}\},
  \{\txcircled{4},\txcircled{7},\txcircled{8}\},  \{\txcircled{7},\txcircled{8},\txcircled{9}\},
\end{align*}

Similarly as in the previous case, examples of adding three boxes on the same line such as
$\{\txcircled{1},\txcircled{2},\txcircled{7}\}$ and $\{\txcircled{1},\txcircled{7},\txcircled{8}\}$
are not allowed by stability. So, there are $3\times (12+1)=39$ equivariant sheaves of this type.
\medskip

There are two more equivariant sheaves, that have two-dimensional asymptotic
weight spaces illustrated below, which leads to the contribution
$6\times 2=12$. We can check that no other weight space configurations are possible.
\begin{center}
\begin{pspicture}(0,-.5)(12.5,5)%\psgrid[gridwidth=0.1pt,subgridwidth=0.1pt,gridcolor=red,subgridcolor=green]
\rput(0.5,0){
\pspolygon[fillstyle=solid,fillcolor=lightgray,linewidth=.3mm](0,0.5)(4,.5)(4,1.5)(0,1.5)(0,0.5)
\pspolygon[fillstyle=solid,fillcolor=lightgray,linewidth=.3mm](0,0.5)(.5,.5)(.5,4)(0,4)(0,0.5)
\rput(0,.5){\psline[linewidth=.3mm](0,.5)(4,.5)}
\rput(4,1){\onebox{1}}\rput(0,1){\onebox{2}}\rput(0,.5){\onebox{1}}
\rput(2,.75){$1$}
\rput(2,1.25){$2$}
\rput(.25,2.5){$1$}
%coordinate axis
\psline[linestyle=dashed]{->}(-.5,.5)(1,.5)\psline[linestyle=dashed]{->}(4.5,.5)(3,.5)
\psline[linestyle=dashed]{->}(0,0)(0,2)
\psline[linestyle=dashed]{->}(4,0)(4,2)
%\rput(2.3,2.7){$U_{\sigma_1}$}\rput(3.8,2.7){$U_{\sigma_2}$}
\rput(1,0.2){$^{(1,0)}$}\rput(-0.4,1.7){$^{(0,1)}$}
\rput(3,0.2){$^{(-1,0)}$}\rput(4.5,1.7){$^{(-1,1)}$}
\psline[linestyle=dashed]{->}(-0.5,4)(1,4)\psline[linestyle=dashed]{->}(0,4.5)(0,3)
\rput(1.5,4.2){$^{(1,-1)}$}\rput(-0.5,3.2){$^{(0,-1)}$}
}
\rput(7.5,0){
\pspolygon[fillstyle=solid,fillcolor=lightgray,linewidth=.3mm](0,0.5)(4,.5)(4,1.5)(0,1.5)(0,0.5)
\pspolygon[fillstyle=solid,fillcolor=lightgray,linewidth=.3mm](0,0.5)(.5,.5)(.5,4)(0,4)(0,0.5)
\rput(0,.5){\psline[linewidth=.3mm](0,.5)(4,.5)}
\rput(0,4){\onebox{1}}\rput(0,1){\onebox{2}}\rput(0,.5){\onebox{1}}
\rput(2,.75){$1$}
\rput(2,1.25){$1$}
\rput(.25,2.5){$2$}
%coordinate axis
\psline[linestyle=dashed]{->}(-.5,.5)(1,.5)\psline[linestyle=dashed]{->}(4.5,.5)(3,.5)
\psline[linestyle=dashed]{->}(0,0)(0,2)
\psline[linestyle=dashed]{->}(4,0)(4,2)
%\rput(2.3,2.7){$U_{\sigma_1}$}\rput(3.8,2.7){$U_{\sigma_2}$}
\rput(1,0.2){$^{(1,0)}$}\rput(-0.4,1.7){$^{(0,1)}$}
\rput(3,0.2){$^{(-1,0)}$}\rput(4.5,1.7){$^{(-1,1)}$}
\psline[linestyle=dashed]{->}(-0.5,4)(1,4)\psline[linestyle=dashed]{->}(0,4.5)(0,3)
\rput(1.5,4.2){$^{(1,-1)}$}\rput(-0.5,3.2){$^{(0,-1)}$}
}
\rput(2.5,-.25){(h)}\rput(9.5,-.25){(i)}
\end{pspicture}
\end{center}

In conclusion, there are $123$ torus fixed sheaves supported on the union of two lines.
\medskip

\subsection{Case 3. Sheaves supported on the union of three lines}

If all the asymptotic weight spaces are one-dimensional,
then the schematic support of the sheaf is a union of two simple lines and a double line, which we denote by $C$.
The sheaf in this case is obtained by either adding three boxes to $\OO_C$
or removing one box from $\OO_C(1)$. We start with the former case.

\begin{center}
\begin{pspicture}(0,-.5)(5.5,5)%\psgrid[gridwidth=0.1pt,subgridwidth=0.1pt,gridcolor=red,subgridcolor=green]
\rput(0.5,0){
\psellipse[fillstyle=solid,fillcolor=lightgray,linewidth=.3mm](.5,1.5)(3.5,2.5)
\psellipse[fillstyle=solid,fillcolor=white,linewidth=.3mm](.5,1.5)(3,2)
\pspolygon[fillstyle=solid,fillcolor=white,linestyle=none](.5,1.5)(.5,4)(-5,4)(-5,-5)(5,-5)(5,1.5)(.5,1.5)
\pspolygon[fillstyle=solid,fillcolor=lightgray,linewidth=.3mm](0,0.5)(4,.5)(4,1.5)(0,1.5)(0,0.5)
\pspolygon[fillstyle=solid,fillcolor=lightgray,linewidth=.3mm](0,0.5)(.5,.5)(.5,4)(0,4)(0,0.5)
\rput(0,.5){\psline[linewidth=.3mm](0,.5)(4,.5)}
\rput(-.5,1){\labeledlinebox{8}}\rput(4,1){\labeledlinebox{5}}\rput(-.5,3.5){\labeledlinebox{2}}\rput(0,4){\labeledlinebox{3}}
\rput(0,1){\onebox{1}}\rput(0,.5){\onebox{1}}\rput(3.5,1){\onebox{1}}\rput(3.5,.5){\onebox{1}}\rput(0,3.5){\onebox{1}}
\rput(0,1){\lineboxs}\rput(0,.5){\lineboxs}\rput(3.5,1){\lineboxs}\rput(3.5,.5){\lineboxs}\rput(0,3.5){\lineboxs}
\rput(.8,3.2){\circled{1}}\psline{->}(.65,3.3)(.4,3.6)
\rput(.8,1.8){\circled{7}}\psline{->}(.65,1.7)(.4,1.4)
\rput(3.2,1.8){\circled{4}}\psline{->}(3.35,1.7)(3.6,1.4)
\rput(3.75,.2){\circled{6}}\psline{->}(3.75,.4)(3.75,.65)
\rput(.25,.2){\circled{9}}\psline{->}(.25,.4)(.25,.65)

\rput(2.5,3.27){$1$}
\rput(2,.75){$1$}
\rput(2,1.25){$1$}
\rput(.25,2.5){$1$}
%coordinate axis
\psline[linestyle=dashed]{->}(-.5,.5)(1,.5)\psline[linestyle=dashed]{->}(4.5,.5)(3,.5)
\psline[linestyle=dashed]{->}(0,0)(0,2)
\psline[linestyle=dashed]{->}(4,0)(4,2)
%\rput(2.3,2.7){$U_{\sigma_1}$}\rput(3.8,2.7){$U_{\sigma_2}$}
\rput(1,0.2){$^{(1,0)}$}\rput(-0.4,1.7){$^{(0,1)}$}
\rput(3,0.2){$^{(-1,0)}$}\rput(4.5,1.7){$^{(-1,1)}$}
\psline[linestyle=dashed]{->}(-0.5,4)(1,4)\psline[linestyle=dashed]{->}(0,4.5)(0,3)
\rput(1.5,4.2){$^{(1,-1)}$}\rput(-0.5,3.2){$^{(0,-1)}$}
}
\rput(2.5,-.25){(j)}
\end{pspicture}
\end{center}

As before, the possible places for three added boxes are shown by boxes with diagonal lines.
By the stability, three boxes added cannot be on the same line,
otherwise the subsheaf generated by these three boxes would have Euler characteristic $1$.
There are $10$ sheaves of this type:
\begin{align*}
  (\textrm{j})\colon &\{\txcircled{1},\txcircled{2},\txcircled{3}\},\{\txcircled{1},\txcircled{2},\txcircled{7}\},
  \{\txcircled{1},\txcircled{3},\txcircled{4}\},  \{\txcircled{1},\txcircled{4},\txcircled{7}\}, \\
  & \{\txcircled{4},\txcircled{5},\txcircled{6}\},\{\txcircled{1},\txcircled{4},\txcircled{5}\},   \{\txcircled{4},\txcircled{6},\txcircled{7}\},\\
  & \{\txcircled{7},\txcircled{8},\txcircled{9}\}, \{\txcircled{1},\txcircled{7},\txcircled{8}\},  \{\txcircled{4},\txcircled{7},\txcircled{9}\},
\end{align*}
Their total contribution is $3\times 10=30$.

Now, we consider the latter case of removing one box from $\OO_C(1)$.

\begin{center}
\begin{pspicture}(0,-0.5)(12,5)%\psgrid[gridwidth=0.1pt,subgridwidth=0.1pt,gridcolor=red,subgridcolor=green]
\rput(7,0){
\psellipse[fillstyle=solid,fillcolor=lightgray,linewidth=.3mm](.5,1.5)(4,3)
\psellipse[fillstyle=solid,fillcolor=white,linewidth=.3mm](.5,1.5)(3.5,2.5)
\pspolygon[fillstyle=solid,fillcolor=white,linestyle=none](.5,1.5)(.5,5)(-5,5)(-5,-5)(5,-5)(5,1.5)(.5,1.5)
\pspolygon[fillstyle=solid,fillcolor=lightgray,linewidth=.3mm](0,0.5)(4,.5)(4,1.5)(0,1.5)(0,0.5)
\pspolygon[fillstyle=solid,fillcolor=lightgray,linewidth=.3mm](0,0.5)(.5,.5)(.5,4)(0,4)(0,0.5)
}
\rput(0.5,0){
\psellipse[fillstyle=solid,fillcolor=lightgray,linewidth=.3mm](.5,1.5)(4,3)
\psellipse[fillstyle=solid,fillcolor=white,linewidth=.3mm](.5,1.5)(3.5,2.5)
\pspolygon[fillstyle=solid,fillcolor=white,linestyle=none](.5,1.5)(.5,5)(-5,5)(-5,-5)(5,-5)(5,1.5)(.5,1.5)
\pspolygon[fillstyle=solid,fillcolor=lightgray,linewidth=.3mm](0.5,0.5)(4,.5)(4,1.5)(0.5,1.5)(0.5,0.5)
\pspolygon[fillstyle=solid,fillcolor=lightgray,linewidth=.3mm](0,1)(.5,1)(.5,4)(0,4)(0,1)
}
\rput(.5,0){
\rput(0,.5){\psline[linewidth=.3mm](0,.5)(4.5,.5)}
\rput(0,1){\onebox{1}}\rput(4,1){\onebox{1}}\rput(4,.5){\onebox{1}}\rput(0,4){\onebox{1}}
\rput(2.75,3.70){$1$}
\rput(2,.75){$1$}
\rput(2,1.25){$1$}
\rput(.25,2.5){$1$}
%coordinate axis
\psline[linestyle=dashed]{->}(-.5,.5)(1,.5)\psline[linestyle=dashed]{->}(4.5,.5)(3,.5)
\psline[linestyle=dashed]{->}(0,0)(0,2)
\psline[linestyle=dashed]{->}(4,0)(4,2)
\rput(1,0.2){$^{(1,0)}$}\rput(-0.4,1.7){$^{(0,1)}$}
\rput(3,0.2){$^{(-1,0)}$}\rput(3.5,1.7){$^{(-1,1)}$}
\psline[linestyle=dashed]{->}(-0.5,4)(1,4)\psline[linestyle=dashed]{->}(0,4.5)(0,3)
\rput(1.3,3.6){$^{(1,-1)}$}\rput(-0.5,3.2){$^{(0,-1)}$}
}
\rput(7,0){
\rput(0,.5){\psline[linewidth=.3mm](0,.5)(4.5,.5)}
\rput(0.5,0){\psline[linewidth=.3mm](0,4)(0,4.5)}
\rput(0,1){\onebox{1}}\rput(0,.5){\onebox{1}}\rput(4,1){\onebox{1}}\rput(4,.5){\onebox{1}}
\rput(2.75,3.70){$1$}
\rput(2,.75){$1$}
\rput(2,1.25){$1$}
\rput(.25,2.5){$1$}
%coordinate axis
\psline[linestyle=dashed]{->}(-.5,.5)(1,.5)\psline[linestyle=dashed]{->}(4.5,.5)(3,.5)
\psline[linestyle=dashed]{->}(0,0)(0,2)
\psline[linestyle=dashed]{->}(4,0)(4,2)
\rput(1,0.2){$^{(1,0)}$}\rput(-0.4,1.7){$^{(0,1)}$}
\rput(3,0.2){$^{(-1,0)}$}\rput(3.5,1.7){$^{(-1,1)}$}
\psline[linestyle=dashed]{->}(-0.5,4)(1,4)\psline[linestyle=dashed]{->}(0,4.5)(0,3)
\rput(1.3,3.6){$^{(1,-1)}$}\rput(-0.5,3.2){$^{(0,-1)}$}
}
\rput(2.5,-.25){(k)}\rput(9,-.25){(l)}
\end{pspicture}
\end{center}

In all sheaves described so far, by changing bases of the weight spaces and using
Theorem \ref{thm:classifyp2}, we can assume the $\chi^\alpha$ maps are either identities,
or projections, or inclusions, depending on the dimensions of the weight spaces involved.
However, we cannot make the same assumption for the sheaf shown at (k) above.
If we start fixing bases from the weight spaces in the lower right corner of the diagram
such that $\chi^\alpha$ are all identities, we have two choices for a basis at the box in the upper left corner which do not need to agree.
So, this weight space configuration will determine an one-dimensional torus fixed locus.
In Example \ref{ex:1dimlocusp2}, we will show that this one-dimensional locus is isomorphic to $\PP^1$.
%We can easily see this by simple dimension count.
Hence, there are $6$ one-dimensional components isomorphic to $\PP^1$ in the torus fixed locus.

It is clear that diagram (l) defines a stable sheaf and there are $3$ of them.

The final example is where we have a two-dimensional asymptotic weight space.
\begin{center}
\begin{pspicture}(0,-0.5)(5,5)%\psgrid[gridwidth=0.1pt,subgridwidth=0.1pt,gridcolor=red,subgridcolor=green]
\rput(0.5,0)
{
\psellipse[fillstyle=solid,fillcolor=lightgray,linewidth=.3mm](.5,1)(3.5,3)
\psellipse[fillstyle=solid,fillcolor=white,linewidth=.3mm](.5,1)(3,2.5)
\pspolygon[fillstyle=solid,fillcolor=white,linestyle=none](.5,1)(.5,4)(-5,4)(-5,-5)(5,-5)(5,1)(.5,1)
\pspolygon[fillstyle=solid,fillcolor=lightgray,linewidth=.3mm](0,0.5)(4,.5)(4,1)(0,1)(0,0.5)
\pspolygon[fillstyle=solid,fillcolor=lightgray,linewidth=.3mm](0,0.5)(.5,.5)(.5,4)(0,4)(0,0.5)
\rput(0,.5){\onebox{2}}\rput(3.5,.5){\onebox{2}}\rput(0,3.5){\onebox{1}}
\rput(2.5,3.2){$1$}
\rput(2,.75){$2$}
\rput(.25,2.5){$1$}
%coordinate axis
\psline[linestyle=dashed]{->}(-.5,.5)(1,.5)\psline[linestyle=dashed]{->}(4.5,.5)(3,.5)
\psline[linestyle=dashed]{->}(0,0)(0,2)
\psline[linestyle=dashed]{->}(4,0)(4,2)
%\rput(2.3,2.7){$U_{\sigma_1}$}\rput(3.8,2.7){$U_{\sigma_2}$}
\rput(1,0.2){$^{(1,0)}$}\rput(-0.4,1.7){$^{(0,1)}$}
\rput(3,0.2){$^{(-1,0)}$}\rput(4.5,1.7){$^{(-1,1)}$}
\psline[linestyle=dashed]{->}(-0.5,4)(1,4)\psline[linestyle=dashed]{->}(0,4.5)(0,3)
\rput(1.5,4.2){$^{(1,-1)}$}\rput(-0.5,3.2){$^{(0,-1)}$}
}
\psline{->}(0.75,.9)(.75,1.25)
\psline{->}(4.25,.9)(4.25,1.25)
\rput(2.5,-.25){(m)}
\end{pspicture}
\end{center}
If the kernels of two $\chi^\alpha$ maps shown by arrows are distinct,
the sheaf is not decomposable, and hence, stable. There are $3$ equivariant sheaves of this kind.

In conclusion, we have the first part of Theorem \ref{main_theorem}.

\begin{theorem}
The $(\CC^*)^2$-fixed point locus of $\M_{\PP^2}(4,1)$ consists of
$180$ isolated points and $6$ one-dimensional components isomorphic to $\PP^1$.
\end{theorem}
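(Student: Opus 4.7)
The plan is to reduce the problem to a combinatorial enumeration of pure $T$-equivariant sheaves of Hilbert polynomial $4m+1$, using the fact (Proposition 4.4 of \cite{kool}) that a stable sheaf on $\PP^2$ is $T$-fixed if and only if it is $T$-equivariant. Theorem \ref{thm:classifyp2} then translates such a sheaf into the data $\{F^\alpha(m),\chi^\alpha_{m,m'}\}$ on the three toric charts, subject to condition ($\ast$), the asymptotic compatibility along each pair of adjacent charts, and Gieseker stability. I would organise the enumeration by pictures on the three copies $M^\alpha$ of the character lattice, each weight space drawn as a labelled box, with strips extending to infinity encoding the multiplicities along the three torus-fixed lines $L_0,L_1,L_2$.

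The next step is to stratify by scheme-theoretic support. Since $P_\F(m)=4m+1$, the total multiplicity along $L_0,L_1,L_2$ is four, so the possibilities are: support on one line (multiplicity four), on two lines (multiplicity patterns $3+1$ or $2+2$), or on three lines (multiplicity pattern $2+1+1$). For each case I would fix a reference equivariant sheaf on that support (an $\OO_{C_k}$-type thickening, or a twist thereof), compute its Euler characteristic, and enumerate the ways to add or remove boxes so that $\chi=1$. Condition ($\ast$) and the asymptotic matching leave only a short finite list of candidate multisets of boxes in each case; these are exactly the configurations collected in diagrams (a)--(m) in the text.

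The main obstacle is then stability. For each admissible combinatorial configuration I would inspect the $T$-equivariant subsheaves generated by sub-collections of weight spaces and verify that no such subsheaf has reduced Hilbert polynomial at least $m+1/4$. This is precisely what rules out the configurations of three aligned boxes in Case 2, pairs like $\{\txcircled{1},\txcircled{4}\}$ or $\{\txcircled{2},\txcircled{3}\}$ in (d), and the analogous one-sided additions in (f) and (j). After these exclusions, summing over the permutation orbits on $\{L_0,L_1,L_2\}$ yields $7\cdot 3=21$ points in Case 1, $6\cdot(5+5+2) + 3\cdot(12+1) + 6\cdot 2 = 123$ points in Case 2, and $3\cdot 10 + 3 + 3 = 36$ points in Case 3, for a total of $180$ isolated fixed points.

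Finally I would single out diagram (k) as the unique combinatorial shape whose gluing data cannot be completely normalised: starting from the lower-right of the picture and changing bases so that every $\chi^\alpha$ is an identity wherever possible, there remain two independent basis choices at the upper-left corner whose ratio is a genuine modulus. I would check that every sheaf in this family is stable and, as to be verified in Example \ref{ex:1dimlocusp2}, identify the resulting fixed component with $\PP^1$. There are $6$ labellings of diagram (k) under permutation of $\{L_0,L_1,L_2\}$ (three choices of double line, times two choices of which end carries the extra isolated simple-line box), producing exactly $6$ components isomorphic to $\PP^1$. Combined with the previous count, this proves the theorem.
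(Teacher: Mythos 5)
Your proposal is correct and follows essentially the same route as the paper's first proof (Section \ref{fixed_locus_1}): reduce to $T$-equivariant sheaves via Kool's equivalence, stratify by scheme-theoretic support, enumerate box configurations subject to condition ($\ast$) and stability, and isolate diagram (k) as the source of the six $\PP^1$-components; your case counts $21+123+36=180$ and the factor-of-six analysis of (k) all match the paper. (The paper also gives an independent second proof via the Dr\'ezet--Maican classification in Section \ref{fixed_locus_2}, but that is not needed for the statement.)
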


\begin{corrolary}
  The topological Euler characteristic of $\M_{\PP^2}(4,1)$ is $192$.
\end{corrolary}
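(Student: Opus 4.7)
The plan is to invoke the standard fact that for a torus action on a smooth projective variety, the topological Euler characteristic of the total space equals the topological Euler characteristic of the fixed point locus. I will then simply evaluate $\chi(X^T)$ using the description given in the preceding theorem.

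Concretely, I would argue as follows. Choose a generic one-parameter subgroup $\lambda \in N$ so that $X^\lambda = X^T$, in the sense described after Theorem \ref{homology_basis}. The Bia{\l}ynicki-Birula plus decomposition then writes $\M_{\PP^2}(4,1)$ as a disjoint union of locally closed cells $X_i^{\lambda+}$, each of which is an affine bundle over the corresponding component $X_i^\lambda$ of the fixed locus, of rank $p(i)$. Since $\chi$ is additive on disjoint unions of locally closed pieces and multiplicative for affine bundles (with $\chi(\CC^{p(i)})=1$), we obtain
\[
\chi(\M_{\PP^2}(4,1)) \;=\; \sum_{i=1}^r \chi(X_i^\lambda) \;=\; \chi(X^T).
\]
Alternatively, one can read this relation directly off the Homology Basis Formula in Theorem \ref{homology_basis} by setting $x=-1$ in the Poincar\'e polynomial identity, noting that each component $X_i^\lambda$ (being either a point or $\PP^1$) has nonzero homology only in even degrees.

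It remains to compute $\chi(X^T)$. By the previous theorem, $X^T$ consists of $180$ isolated points and $6$ components isomorphic to $\PP^1$. Using $\chi(\text{pt})=1$ and $\chi(\PP^1)=2$, additivity gives
\[
\chi(X^T) \;=\; 180\cdot 1 \;+\; 6\cdot 2 \;=\; 192,
\]
which proves the corollary. There is no real obstacle here; the only thing to verify carefully is the existence of a one-parameter subgroup $\lambda$ with $X^\lambda = X^T$, but this is guaranteed by \cite[Lemma 4.1]{carrell} as recalled in Section \ref{bbtheory}, since the tangent weights at the (finitely many connected components of) fixed points form a finite set of characters to avoid.
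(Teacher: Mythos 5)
Your proof is correct and matches the paper's (implicit) argument: the corollary is deduced from the preceding theorem via the standard identity $\chi(X)=\chi(X^T)$, giving $180\cdot 1+6\cdot 2=192$. The justification you give through the Bia{\l}ynicki-Birula decomposition (or equivalently by setting $x=-1$ in the Homology Basis Formula) is exactly the machinery the paper has already set up in Section 2.
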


\begin{example}[Positive dimensional fixed locus]
\label{ex:1dimlocusp2}\hfill \\
\begin{center}
\begin{pspicture}(4,4)%
\psline(.5,0)(.5,4) \psline(0,.5)(4,.5) \psline(0,4)(4,0)
\psdots(.5,.5)(.5,3.5)(3.5,.5) \uput[l](.5,3.5){$p_2$}
\uput[dl](.5,.5){$p_0$}\uput[d](3.5,.5){$p_1$} \uput[l](.5,2){$L_1$}
\uput[d](2,.5){$L_2$}\uput[ur](2,2){$L_0$}
\end{pspicture}
\end{center}
Let $L_0, L_1, L_2$ be the three torus fixed lines in $\PP^2$. Let $C=2L_2 \cup L_0$.

If we read the diagram of one-dimensional fixed locus above,
we get the stable sheaf defined by the following short exact sequence:
\[
\xymatrix  @C=2em @R=2em @*[c]{0\ar[r] &\F \ar[r] & I_{p_0,C}(1)\oplus \OO_{L_1}\ar[rrr]^-{
\left(
  \begin{smallmatrix}
    -a\cdot \rest_{p_0} &     b\cdot \rest_{p_0} \\
    -c\cdot \rest_{p_2} &     d\cdot \rest_{p_2} \\
  \end{smallmatrix}
\right)}
& &&\CC_{p_0} \oplus \CC_{p_2} \ar[r] &0 }.
\]
Here $a, b, c, d$ are complex numbers and $\rest_{p_0}$, $\rest_{p_2}$ are the restriction
maps to the corresponding points.
By stability, neither $a$ nor $c$ can be zero. Indeed, if $a$ were zero, then $\F$
would be the kernel of the corestriction $I_{p_0,C}(1)\oplus I_{p_0,L_1}\to \CC_{p_2}$.
In particular, $I_{\{p_0, p_2\},C}(1)$ would be a subsheaf of $\F$.
Similarly, if $c$ were zero, then $I_{2p_0,C}(1)$ would be a subsheaf of $\F$.
These subsheaves have Hilbert polynomial $3m + 1$, destabilizing $\F$.

Now, applying an automorphism of $\CC_{p_0} \oplus \CC_{p_2}$ , we may assume that $a=1$ and $c=1$.
Denote by $\F(b,d)$ the sheaf corresponding to $(b, d)\in \CC^2$.
To classify such stable sheaves, note that $I_{\{2p_0,p_2\},C}(1) \simeq \OO_C$ is a subsheaf of
$\F(b,d)$. Since the quotient is $\OO_{L_1}$, $\F(b,d)$ fits in the short exact sequence
\[
0 \to \OO_C \to \F(b,d) \to \OO_{L_1} \to 0.
\]
This sequence splits if and only if $(b, d) = (0, 0)$.
In other words, $\F(b,d)$ is stable if and only if $(b, d)$ is in $\CC^2 \setminus \{(0, 0)\}$.
Clearly $\F(b,d)\simeq \F(kb,kd)$ for $k\in \CC^*$.
Thus $\{[\F(b,d)]\}$ forms a $T$-fixed locus isomorphic to $\PP^1$.
\end{example}

%%%%%%%%%%%%%%%%%%%%%%%%%%%%%%%%%%%%%%%%%%%%% section 3

\section{Torus representation of the tangent spaces I}
\label{representation_1}

The tangent space of the moduli space of semi-stable sheaves
at a point corresponding to a sheaf $\F$ is given by $\Ext^1(\F,\F)$.
Consider
\[
\chi(\F,\F)=\sum_{i=0}^2(-1)^i \Ext^i(\F,\F).
\]
For a stable sheaf $\F$ in $\M_{\PP^2} (4,1)$, we have $\Ext^2(\F,\F)=0$,
and the $T$-action on $\Hom(\F,\F)\simeq \CC$ is trivial.
Hence, in the representation ring of the torus $T$, we have
\[
\Ext^1(\F,\F)= 1- \chi(\F,\F).
\]
Thus, it is enough to compute the representation of $\chi(\F,\F)$.
We use the technique of \cite{mnop1}. By the local-to-global spectral sequence, we have
\[
\chi(\F,\F) =\sum_{i,j=0}^2(-1)^{i+j} \HH^i({\mathcal Ext}^j(\F,\F)).
\]

For $\alpha=0,1,2$, let $U_\alpha$ be the affine open subset of $\PP^2$ defined in Section \ref{fixed_locus_1}.
Let $U_{\alpha\beta} = U_\alpha \cap U_\beta$.
We replace the cohomology with the \v{C}ech complex $\mathfrak{C}^i({\mathcal Ext}^j(\F,\F))$
with respect to the open cover $\{U_\alpha\}$.
A $T$-fixed sheaf $\F$ is necessarily supported on $T$-invariant lines.
Since no $T$-invariant line meets the intersection of three open sets,
we only need to consider $\mathfrak{C}^0$ and $\mathfrak{C}^1$. Thus,
\[
\chi(\F,\F)= \bigoplus_{\alpha=0}^2 \sum_j (-1)^j \Gamma(U_\alpha, {\mathcal Ext}^j(\F,\F))
-\bigoplus_{\alpha,\beta} \sum_j (-1)^j \Gamma(U_{\alpha \beta}, {\mathcal Ext}^j(\F,\F)).
\]

Let $Q_\alpha$ be the $T$-character of $\Gamma(U_\alpha,\F)$. Define
\[
\overline{Q}_\alpha(t_1,t_2)=Q_\alpha(t_1^{-1},t_2^{-1}).
\]

Recall that $R_\alpha$ is the coordinate ring $\Gamma(U_\alpha)$ and $v_\alpha$, $w_\alpha$,
are $T$-characters for the generators of $R_\alpha$.

Consider a $T$-equivariant free resolution of $F_\alpha=\Gamma(U_\alpha,\F)$.
\begin{equation}
  0\to F_s \to \cdots\to F_2\to F_1\to F_\alpha \to 0.
  \label{freeresolution}
\end{equation}
Each term in \eqref{freeresolution} is of the form
\[
F_i =\oplus_j R_\alpha(d_{ij}), \hspace{2em} d_{ij}\in \ZZ^2.
\]
Let
\[
P_\alpha(t_1,t_2)= \sum_{i,j} (-1)^i t^{d_{ij}}.
\]
Then, from the exact sequence \eqref{freeresolution},
\[
Q_\alpha(t_1,t_2)= \frac{P_\alpha(t_1,t_2)}{(1-v_\alpha)(1-w_\alpha)},
\]

The representation $\chi(F_\alpha,F_\alpha)$ is given by the alternating sum
\begin{align*}
\chi(F_\alpha,F_\alpha)&= \sum_{i,j,k,l}(-1)^{i+k}\Hom(R_\alpha(d_{ij}),  R_\alpha(d_{kl}))\\
&= \sum_{i,j,k,l}(-1)^{i+k}R_\alpha(d_{kl}-d_{ij})\\
&= \frac{P_\alpha(t_1,t_2)P_\alpha(t_1^{-1},t_2^{-1})}{(1-v_\alpha)(1-w_\alpha)}\\
&= Q_\alpha \overline{Q}_\alpha(1-v_\alpha^{-1})(1-w_\alpha^{-1}).
\end{align*}

Similarly for intersection $U_{\alpha\beta}$, let $R_{\alpha\beta}$ denote the coordinate ring $\Gamma(U_{\alpha\beta})$.
Let $v_{\alpha\beta}$, $w_{\alpha\beta}$ be $T$-characters for the generators of $R_\alpha$,
where $v_{\alpha\beta}^{-1}$ is in $R_{\alpha\beta}$. For example,
since $R_0=\CC[x,y]$ and $R_{01}=\CC[x,x^{-1},y]$, we take $(v_{01},w_{01})=(t_1, t_2)$.
Similarly, $(v_{12},w_{12})=(t_1^{-1}t_2, t_1^{-1})$, etc.
Then, the $T$-character of $F_{\alpha\beta}=\Gamma(U_{\alpha\beta},\F)$ has an overall factor
\[
\delta(v_{\alpha\beta})=\displaystyle{\sum_{n=-\infty}^{\infty}
v_{\alpha\beta}^n=\frac{1}{1-v_{\alpha\beta}}+\frac{v_{\alpha\beta}^{-1}}{1-v_{\alpha\beta}^{-1}}}.
\]
Let $Q_{\alpha\beta}$ be such that the $T$-character of $F_{\alpha\beta}$ is
\[
\delta(v_{\alpha\beta})Q_{\alpha\beta}.
\]
By the same computation as before, we get
\[
\chi(F_{\alpha\beta},F_{\alpha\beta})= \delta(v_{\alpha\beta}) Q_{\alpha\beta} \overline{Q}_{\alpha\beta}(1-w_{\alpha\beta}^{-1}).
\]

Therefore, we get the following.
\begin{proposition}\label{prop:trep}
\[
\chi(\F,\F)=\sum_{\alpha=0}^{2} Q_\alpha \overline{Q}_\alpha(1-v_\alpha^{-1})(1-w_\alpha^{-1})
-\sum_{\alpha,\beta}\delta(v_{\alpha\beta}) Q_{\alpha\beta} \overline{Q}_{\alpha\beta}(1-w_{\alpha\beta}^{-1}).
\]
\end{proposition}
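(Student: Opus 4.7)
The plan is to assemble the three ingredients set up above the statement: the local-to-global spectral sequence for $\Ext^*(\F,\F)$, the \v{C}ech resolution with respect to the affine cover $\{U_0, U_1, U_2\}$, and the explicit evaluation of the Euler characters $\chi(F_\alpha, F_\alpha)$ and $\chi(F_{\alpha\beta}, F_{\alpha\beta})$ via $T$-equivariant free resolutions.

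First I would justify that only the $\mathfrak{C}^0$ and $\mathfrak{C}^1$ \v{C}ech terms contribute. Every $T$-fixed $\F$ is supported on the union of the three torus-invariant lines $\{x_i=0\}$, which are disjoint from the triple intersection $U_0 \cap U_1 \cap U_2 = (\CC^*)^2$, so each ${\mathcal Ext}^j(\F,\F)$ vanishes there. Combined with the vanishing of higher quasi-coherent cohomology on the affines $U_\alpha$ and $U_{\alpha\beta}$, this produces the alternating-sum formula already displayed for $\chi(\F,\F)$.

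Next, on each $U_\alpha$ I would take a $T$-equivariant free resolution $F_\bullet \to F_\alpha$ of length at most $2$, available because $R_\alpha \cong \CC[v_\alpha, w_\alpha]$ is regular of dimension $2$. Forming $\Hom_{R_\alpha}(F_\bullet, F_\bullet)$ and passing to the alternating sum in the representation ring yields
\[
\chi(F_\alpha, F_\alpha) = \frac{P_\alpha(t_1, t_2)\,P_\alpha(t_1^{-1}, t_2^{-1})}{(1 - v_\alpha)(1 - w_\alpha)},
\]
which, via $P_\alpha = Q_\alpha(1 - v_\alpha)(1 - w_\alpha)$, simplifies to $Q_\alpha \overline{Q}_\alpha(1 - v_\alpha^{-1})(1 - w_\alpha^{-1})$. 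For the intersections, $\F|_{U_{\alpha\beta}}$ is supported on the unique remaining torus-invariant line in $U_{\alpha\beta}$, cut out by the regular element $w_{\alpha\beta}$, so $F_{\alpha\beta}$ admits a length-one $T$-equivariant free resolution over $R_{\alpha\beta}$; the analogous calculation produces $\delta(v_{\alpha\beta})\,Q_{\alpha\beta}\,\overline{Q}_{\alpha\beta}(1 - w_{\alpha\beta}^{-1})$.

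The main subtlety will be the bookkeeping of the formal series $\delta(v_{\alpha\beta}) = \sum_{n \in \ZZ} v_{\alpha\beta}^n$, which lies in a completion of the representation ring rather than the ring itself. The proof must verify that the difference written in the proposition, despite involving $\delta$-factors, evaluates to a finite Laurent polynomial in $t_1, t_2$ whose coefficients are the genuine $T$-characters of the virtual representation $\Ext^*(\F,\F)$. This finiteness holds a priori because $\chi(\F,\F)$ is finite-dimensional; operationally, the $\delta$-contributions that arise when one expands $Q_\alpha$ as a formal series toward the torus-invariant locus in $U_\alpha$ cancel against the $U_{\alpha\beta}$ intersection terms. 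Once this cancellation is recorded cleanly, reading off signs from the \v{C}ech differential gives exactly the formula in Proposition \ref{prop:trep}.
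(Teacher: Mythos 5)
Your proposal is correct and follows essentially the same route as the paper: the \v{C}ech reduction to $\mathfrak{C}^0$ and $\mathfrak{C}^1$ using that $T$-fixed sheaves are supported on the invariant lines, the computation of $\chi(F_\alpha,F_\alpha)$ from a $T$-equivariant free resolution via $P_\alpha=Q_\alpha(1-v_\alpha)(1-w_\alpha)$, and the analogous computation on the intersections with the periodicity factor $\delta(v_{\alpha\beta})$. Your remarks on the finiteness of the total sum despite the infinite $\delta$-terms match the paper's appeal to the MNOP argument.
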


Although each term in the summation is infinite dimensional, the total sum is necessarily finite, cf. \cite{mnop1}.

Our proof of Theorem \ref{main_theorem} is a case by case computation using the classification in
Section \ref{fixed_locus_1} and Proposition \ref{prop:trep}.
In Example \ref{ex:trep}, we carry out the computation for the last example in
Section \ref{fixed_locus_1}.
The computation for the other sheaves is similar and equally complicated.

By \cite[Lemma 4.1]{carrell},
once the $T$-representation is computed, we can take a generic one-parameter
subgroup of $T$ to compute the numbers $p(i)$ from Theorem \ref{homology_basis}.
For this, we will use the one-parameter subgroup
\begin{equation}
\label{eq:1parmsg}
\lambda(t)=(t, t^l),
\end{equation}
for a sufficiently large $l$.
Thus, $p(i)$ is equal to the number of weights $t_1^a t_2^b$ in the $T$-representation satisfying
\[
b>0 \quad \text{or} \quad (a>0 \quad \text{and} \quad b=0).
\]

\begin{example}\label{ex:trep}
We compute the $T$-representation of the tangent space at the sheaf in example (m) in
Section \ref{fixed_locus_1}. We include the diagram again in Figure \ref{fig:lastex}.
\begin{figure}
\begin{center}
\begin{pspicture}(0,0.5)(5,4.5)
\rput(0.5,0){
\psellipse[fillstyle=solid,fillcolor=lightgray,linewidth=.3mm](.5,1)(3.5,3)
\psellipse[fillstyle=solid,fillcolor=white,linewidth=.3mm](.5,1)(3,2.5)
\pspolygon[fillstyle=solid,fillcolor=white,linestyle=none](.5,1)(.5,4)(-5,4)(-5,-5)(5,-5)(5,1)(.5,1)
\pspolygon[fillstyle=solid,fillcolor=lightgray,linewidth=.3mm](0,0.5)(4,.5)(4,1)(0,1)(0,0.5)
\pspolygon[fillstyle=solid,fillcolor=lightgray,linewidth=.3mm](0,0.5)(.5,.5)(.5,4)(0,4)(0,0.5)
\rput(0,.5){\onebox{2}}\rput(3.5,.5){\onebox{2}}\rput(0,3.5){\onebox{1}}
\rput(2.5,3.2){$1$}
\rput(2,.75){$2$}
\rput(.25,2.5){$1$}
%coordinate axis
\psline[linestyle=dashed]{->}(-.5,.5)(1,.5)\psline[linestyle=dashed]{->}(4.5,.5)(3,.5)
\psline[linestyle=dashed]{->}(0,0)(0,2)
\psline[linestyle=dashed]{->}(4,0)(4,2)
%\rput(2.3,2.7){$U_{\sigma_1}$}\rput(3.8,2.7){$U_{\sigma_2}$}
\rput(1,0.2){$^{(1,0)}$}\rput(-0.4,1.7){$^{(0,1)}$}
\rput(3,0.2){$^{(-1,0)}$}\rput(4.5,1.7){$^{(-1,1)}$}
\psline[linestyle=dashed]{->}(-0.5,4)(1,4)\psline[linestyle=dashed]{->}(0,4.5)(0,3)
\rput(1.5,4.2){$^{(1,-1)}$}\rput(-0.5,3.2){$^{(0,-1)}$}
}
\psline{->}(0.75,.9)(.75,1.25)
\psline{->}(4.25,.9)(4.25,1.25)
\end{pspicture}
\end{center}
\caption{The example (m) in Section \ref{fixed_locus_1}}\label{fig:lastex}
\end{figure}
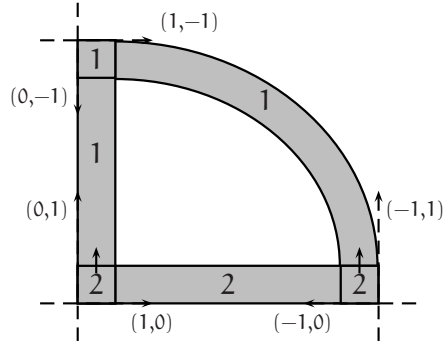
From the diagram, we can see that

\begin{align*}
  Q_0&= 2(1+t_1+t_1^2+\cdots)+(t_2+t_2^2+\cdots)=\frac{2}{1-t_1}+\frac{t_2}{1-t_2}, \\
  Q_1&= 2(1+t_1^{-1}+t_1^{-2}+\cdots)+(t_1^{-1}t_2+t_1^{-2}t_2^2+\cdots)=\frac{2}{1-t_1^{-1}}+\frac{t_1^{-1}t_2}{1-t_1^{-1}t_2},  \\
  Q_2&= (1+t_2^{-1}+t_2^{-2}+\cdots)+(t_1t_2^{-1}+t_1^2t_2^{-2}+\cdots)=\frac{1}{1-t_2^{-1}}+\frac{t_1t_2^{-1}}{1-t_1t_2^{-1}},
\end{align*}

\begin{align*}
  Q_{01}&=2 , \\
  Q_{12}&= 1 ,\\
  Q_{20}&= 1 .
\end{align*}
By applying Proposition \ref{prop:trep}, we get the weight decomposition
\[
\Ext^1(\F,\F)= t_1^{-1}t_2^2+t_2+t_1^{-1}t_2+t_1+t_1^{-1}+2 t_1^2t_2^{-1}+4 t_1t_2^{-1}+4t_2^{-1}+2t_1^{-1}t_2^{-1}.
\]
With respect to the one-parameter subgroup \eqref{eq:1parmsg},
the dimension of the plus cell attached to this fixed point is $4$.
The computations for the other fixed points are similar.
By Theorem \ref{homology_basis}, we get Theorem \ref{main_theorem}.
\end{example}

%%%%%%%%%%%%%%%%%%%%%%%%%%%%%%%%%%%% section 5

\section{Torus fixed locus II}
\label{fixed_locus_2}

In the last two sections we will present a different method for investigating the torus fixed locus.
We change slightly the conventions. In the sequel $T$ will be the torus $(\CC^*)^3/\{ (a,a,a) \mid a \in \CC^* \}$
acting on $\PP^2$ by
\[
(t_0, t_1, t_2) \cdot (x,y,z) = (t_0^{-1} x, t_1^{-1} y, t_2^{-1} z).
\]
This is, of course, equivalent to the set-up in Sections \ref{fixed_locus_1} and \ref{representation_1}.
We fix a vector space $V \simeq \CC^3$ and we make the identification $\PP^2 = \PP(V)$.
We fix a basis $\{X, Y, Z \}$ of $V^*$ dual to the standard basis of $V$.
The induced action of $T$ on the symmetric algebra of $V^*$ is given by
$t \cdot X^iY^jZ^k = t_0^i t_1^j t_2^k X^i Y^j Z^k$.

\noindent \\
We begin by recalling some notations and results from \cite{drezet-maican}.

\noindent
\begin{tabular}{rcl}
$\M_0$ & = & the open subset of $\M$ consisting of sheaves $\F$ having resolution
\\
& & $ 0 \to 3\OO(-2) \stackrel{\varphi}{\to} 2\OO(-1) \oplus \OO \to \F \to 0$,
\\
& & where $\varphi_{11}$ has linearly independent maximal minors;
\\
$\M_{01}$ & = & the locally closed subset of $\M_0$ given by the condition that
\\
& & the maximal minors of $\varphi_{11}$ have a common linear factor;
\\
$W_{\phantom{0}}$ & = & $\Hom(3\OO(-2), 2\OO(-1) \oplus \OO)$;
\\
$W_0$ & = & the set of injective morphisms $\varphi \in W$ such that
\\
& & the maximal minors of $\varphi_{11}$ are linearly independent;
\\
$G_0$ & = & $\big( \Aut(3\OO(-2)) \times \Aut(2\OO(-1) \oplus \OO) \big)/\CC^*$,
\\
& & where $\CC^*$ is the subgroup of homotheties;
\\
$\M_1$ & = & the smooth subvariety of codimension $2$ consisting of
sheaves
\\
& & $\F$ having resolution $0 \to \OO(-3) \oplus \OO(-1) \stackrel{\varphi}{\to} 2\OO \to \F \to 0$,
\\
& & where $\varphi_{12}$ has linearly independent entries;
\\
$W_1$ & = & set of morphisms $\varphi \colon \OO(-3) \oplus \OO(-1) \to 2\OO$ as above;
\\
$G_1$ & = & $\big( \Aut(\OO(-3) \oplus \OO(-1)) \times \Aut(2\OO) \big)/\CC^*$.
\end{tabular}

\noindent \\
The moduli space $\M_{\PP^2}(4,1)$ is the disjoint union of $\M_0$ and $\M_1$.
Moreover, $\M_0$ is isomorphic to the geometric quotient $W_0/G_0$, while $\M_1$ is isomorphic
to the geometric quotient $W_1/G_1$.
The sheaves in $\M_0 \setminus \M_{01}$ are precisely the non-split extensions
\[
0 \to \OO_Q \to \F \to \OO_S \to 0,
\]
where $Q \subset \PP^2$ is a quartic curve and $S \subset \PP^2$ is a zero-dimensional scheme
of length $3$ that is not contained in a line.
Here $Q = \{ \det(\varphi) = 0 \}$ and $S$ has ideal generated by the maximal minors of $\varphi_{11}$.
The sheaves in $\M_{01}$ are precisely the extension sheaves
\[
0 \to \OO_C \to \F \to \OO_L \to 0,
\]
satisfying $\HH^1(\F) = 0$, where $C$ is a cubic curve and $L$ a line. In fact $L$ has equation $l = 0$,
where $l$ is the common factor of the maximal minors of $\varphi_{11}$.
We have $\dim(\Ext^1(\OO_L, \OO_C))=3$.
The sheaves in $\M_1$ are precisely the kernels of surjective morphisms $\OO_Q(1) \to \CC_p$,
where $Q$ is a quartic curve and $\CC_p$ is the structure sheaf of a closed point $p$.

\noindent \\
The $T$-fixed points in $\PP^2$ are $p_0 = (1, 0, 0)$, $p_1 = (0,1,0)$, $p_2 = (0,0,1)$.
The fixed double points are $p_{01}$, $p_{02}$, $p_{10}$, $p_{12}$, $p_{20}$, $p_{21}$,
where $p_{ij}$ is supported on $p_i$ and is contained in the line $p_i p_j$.
Denote by $s_0$, $s_1$, $s_2$ the fixed triple points supported on $p_0$, $p_1$, $p_2$,
that are not contained in a line.
The subsets $\M_0$, $\M_1$, $\M_{01}$ are $T$-invariant.

\subsection{Fixed points in $\M_0 \setminus \M_{01}$}  %%%%%%%% subsection 5.1
\label{5.1}

Since $\HH^0(\F)$ generates $\OO_Q$ we see that $\F$ is fixed by $T$ if and only if $Q$ and $S$
are fixed by $T$. Thus $Q = \{ X^i Y^j Z^k = 0\}$, where $i+j+k=4$, and $S$ belongs to the following
list: $\{ p_0, p_1, p_2 \}$, $\{ p_{01}, p_2 \}$, $\{ p_{02}, p_1 \}$, $\{ p_{10}, p_2 \}$, $\{ p_{12}, p_0 \}$,
$\{ p_{20}, p_1 \}$, $\{ p_{21}, p_0 \}$, $\{ s_0 \}$, $\{ s_1 \}$, $\{ s_2 \}$.
For each $S$ there are precisely $12$ invariant quartics containing $S$, so we have $120$
fixed points in $\M_0 \setminus \M_{01}$.

\noindent \\
Assume $S = \{ p_0, p_1, p_2 \}$.
Then $I_S = (XY, XZ, YZ)$ and $\F = {\mathcal Coker}(\varphi)$, where $\varphi$ is represented
by one of the following matrices ($i+j+k =2$, $i, j, k \ge 0$):
\[
\label{5.1.1}
\tag{5.1.1}
\left[
\begin{array}{ccc}
Y & 0 & X \\
0 & Z & X \\
X^i Y^j Z^k & 0 & 0
\end{array}
\right], \quad \left[
\begin{array}{ccc}
Y & 0 & X \\
0 & Z & X \\
0 & X^i Y^j & 0
\end{array}
\right], \quad \left[
\begin{array}{ccc}
Y & 0 & X \\
0 & Z & X \\
0 & 0 & Y^j Z^k
\end{array}
\right].
\]
Assume that $S = \{ p_{21}, p_0\}$.
Then $I_S = (XY, XZ, Y^2)$ and $\F = {\mathcal Coker}(\varphi)$, where $\varphi$ is one of the following morphisms:
\[
\label{5.1.2}
\tag{5.1.2}
\left[
\begin{array}{ccc}
X & 0 & Y \\
0 & Y & Z \\
0 & 0 & X^iY^jZ^k
\end{array}
\right], \quad \left[
\begin{array}{ccc}
X & 0 & Y \\
0 & Y & Z \\
0 & X^iZ^k & 0
\end{array}
\right], \quad \left[
\begin{array}{ccc}
X & 0 & Y \\
0 & Y & Z \\
Y^jZ^k & 0 & 0
\end{array}
\right].
\]
To obtain the fixed points corresponding to the other schemes of length $3$ that are a union
of a double point and a closed point apply permutations in the variables $X, Y, Z$
to the above matrices.

\noindent \\
Assume that $S= \{ s_0\}$. Then $I_S = (YZ, Y^2, Z^2)$ and $\F = {\mathcal Coker}(\varphi)$, where $\varphi$
belongs to the list:
\[
\label{5.1.3}
\tag{5.1.3}
\left[
\begin{array}{ccc}
Y & 0 & Z \\
0 & Z & Y \\
0 & 0 & X^i Y^j Z^k
\end{array}
\right], \quad \left[
\begin{array}{ccc}
Y & 0 & Z \\
0 & Z & Y \\
0 & X^i Y^j & 0
\end{array}
\right], \quad \left[
\begin{array}{ccc}
Y & 0 & Z \\
0 & Z & Y \\
X^i Z^k & 0 & 0
\end{array}
\right].
\]
To obtain the fixed points corresponding to $S = \{ s_1\}$ swap $X$ and $Y$ in the above
matrices. For $S = \{ s_2 \}$ swap $X$ and $Z$.

\subsection{Fixed points in $\M_{01}$} %%%%%%%%%%%%%% subsection 5.2
\label{5.2}

Denote $\M_{L,C} = \PP(\Ext^1(\OO_L, \OO_C)) \cap \M_{01}$. Assume that $\F \in \M_{L,C}$
is fixed by $T$. Since $\HH^0(\F)$ generates $\OO_C$ we see that $L$ and $C$ are fixed
by $T$. Assume that $L = \{ X=0 \}$ and $C = \{ g = 0 \}$, where $g = X^i Y^j Z^k$, $i+j+k =3$.
Fix $q_1, q_2, q_3 \in \Sym^2 V^*$ such that $q_1 Y + q_2 Z + q_3 X = g$.
Consider the set $U \subset W_0$ of morphisms of the form
\[
\left[
\begin{array}{ccc}
X & 0 & -Y \\
0 & X & -Z \\
q_1 - uZ & q_2 + uY & q_3
\end{array}
\right], \qquad u = bY+cZ, \quad b, c \in \CC.
\]
Let $\rho \colon W_0 \to \M_0$, $\rho(\varphi) = [{\mathcal Coker}(\varphi)]$, be the geometric quotient map.

\begin{proposition}
The restricted map $\rho \colon U \to \M_{L,C}$ is an isomorphism.
\end{proposition}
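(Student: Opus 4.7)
The plan is to show that $\rho$ sends $U$ into $\M_{L,C}$, that $\rho|_U$ is a bijection onto $\M_{L,C}$, and that both the map and its inverse are regular. The $G_0$-action on $W_0$ is the main bookkeeping tool; $U$ should be regarded as a slice for this action, so the argument amounts to verifying that this slice is transverse and meets each $G_0$-orbit in $\M_{L,C}$ in exactly one point.

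First I would check well-definedness. Expanding along the first row, $\det(\varphi) = X(Xq_3 + Zq_2 + Yq_1) = Xg$, which is nonzero, so $\varphi$ is injective. The maximal minors of $\varphi_{11}$ are $X^2$, $-XZ$, $XY$, which share the common linear factor $X$ and are linearly independent after dividing by $X$. Hence $\varphi \in W_0$ and $\rho(\varphi) \in \M_{01}$, with associated line $\{X = 0\} = L$. The kernel of $\varphi_{11}$ is generated by the column $(Y, Z, X)^{\top}$ viewed as a morphism $\OO(-3) \to 3\OO(-2)$; composing with the bottom row of $\varphi$ gives the scalar $Yq_1 + Zq_2 + Xq_3 = g$, so the $\OO$-summand of $2\OO(-1) \oplus \OO$ projects onto $\OO_C \subset \F$, with associated cubic $\{g = 0\} = C$. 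By the classification of $\M_{01}$ recalled from \cite{drezet-maican}, the quotient $\F/\OO_C$ is $\OO_L$, placing $\F$ in $\M_{L,C}$.

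For injectivity of $\rho|_U$, two elements of $U$ with $G_0$-equivalent morphisms must coincide. An equivalence is a pair $(g, h) \in \Aut(3\OO(-2)) \times \Aut(2\OO(-1) \oplus \OO)$ with $h\varphi = \varphi'g$; preserving the rigid shape of $\varphi_{11}$ pins the $\GL_2 \times \GL_3$-part of $(g,h)$ down to a small stabilizer (scalars together with a lower-triangular correction). The residual operations on the bottom row are the scaling of the $\OO$-summand and the addition of $\lambda_1 \cdot (\text{row 1}) + \lambda_2 \cdot (\text{row 2})$ with $\lambda_1, \lambda_2 \in \Sym^1 V^*$. A direct computation shows that these operations alter $q_1, q_2, q_3$ only within their inherent ambiguity class modulo the equation $Yq_1 + Zq_2 + Xq_3 = g$, while leaving the class of $u = bY+cZ$ unchanged, so $(b,c)$ is determined by $[\F]$.

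For surjectivity, let $[\F] \in \M_{L,C}$ with a chosen resolution $0 \to 3\OO(-2) \xrightarrow{\varphi} 2\OO(-1) \oplus \OO \to \F \to 0$ in $W_0$. The common factor of the maximal minors of $\varphi_{11}$ is a scalar multiple of $X$, and since the cokernel of $\varphi_{11}$ is determined up to isomorphism by these minors, the $\GL_2 \times \GL_3$ action normalizes $\varphi_{11}$ to the standard form used in $U$. Once $\varphi_{11}$ is so normalized, the identity $\det(\varphi) = Xg$ forces the bottom row to have the shape $(q_1, q_2, q_3)$ modulo the row and scaling operations from the previous paragraph; a final adjustment absorbs the remaining linear-form correction into a term $u = bY+cZ$ with uniquely determined $b, c$, producing a unique preimage in $U$. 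The morphism $U \to \M_{L,C}$ is visibly regular, and the inverse is produced by this explicit slice construction, which is itself algebraic. The principal obstacle is the surjectivity step: one must analyze the stabilizer of the normalized $\varphi_{11}$ in $G_0$ and verify that, after exhausting all allowed operations, exactly the two parameters $b$ and $c$ remain, matching the expected dimension of $\M_{L,C}$.
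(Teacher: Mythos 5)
Your overall strategy is the same as the paper's: normalize $\varphi_{11}$ to the standard Kronecker form, observe that the only residual freedom is the action of $(v_1,v_2,a)\in (V^*)^2\times\CC^*$ on the bottom row, and then argue that this freedom cannot move $u=bY+cZ$. Your well-definedness check (computing $\det(\varphi)=Xg$, the minors of $\varphi_{11}$, and the copy of $\OO_C$ inside $\F$) is a sound addition that the paper simply takes for granted.

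The weak point is injectivity, where you write that ``a direct computation shows'' the residual operations leave $u$ unchanged: that computation is the entire content of the proof, and as stated your argument only asserts the conclusion. Three things need to be supplied. First, the claim that the $\GL(2)\times\GL(3)$-part of the stabilizer reduces to scalars requires an input: the paper gets it from the fact that $\varphi_{11}$ is \emph{stable as a Kronecker module}, hence has trivial stabilizer in $(\GL(3,\CC)\times\GL(2,\CC))/\CC^*$; your phrase ``pins down to a small stabilizer'' needs this justification. Second, writing the equivalence as $[v_1\ v_2\ a]\,\varphi = [q_1-u'Z\ \ q_2+u'Y\ \ q_3]$ and rearranging gives $(1-a)[q_1\ q_2\ q_3]=[v_1\ v_2\ au-u']\,M$ with $M$ the $3\times 3$ syzygy matrix; pairing with the column $(Y,Z,X)^{\top}$ yields $(1-a)g=0$, forcing $a=1$ — this is why the scaling cannot be absorbed, and it is not obvious without the computation. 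Third, one must know that the left kernel of $M$ is generated by $(Z,-Y,X)$, so that $u-u'$ is a scalar multiple of $X$; since $u-u'=(b-b')Y+(c-c')Z$ has no $X$-component, it vanishes. Without these three steps the statement ``leaving the class of $u$ unchanged'' is exactly what is to be proved. Your surjectivity sketch is fine and matches the paper's (reduce the bottom row modulo the image of $M$, then absorb the $X$-component of $u_3$ by a further row operation), though the uniqueness of $(b,c)$ belongs to the injectivity half rather than to surjectivity.
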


\begin{proof}
Clearly each sheaf $\F$ giving a point in $\M_{L,C}$ is isomorphic to the cokernel of a morphism
\[
\varphi = \left[
\begin{array}{ccc}
X & 0 & -Y \\
0 & X & -Z \\
q_1' & q_2' & q_3'
\end{array}
\right],
\]
where $q_1'Y + q_2' Z + q_3' X = g$.
Since $(q_1' - q_1) Y + (q_2' - q_2) Z + (q_3'-q_3) X=0$,
there are $u_1, u_2, u_3 \in V^*$ such that
\[
\left[
\begin{array}{ccc}
q_1' & q_2' & q_3'
\end{array}
\right] - \left[
\begin{array}{ccc}
q_1 & q_2 & q_3
\end{array}
\right] = \left[
\begin{array}{ccc}
u_1 & u_2 & u_3
\end{array}
\right] \left[
\begin{array}{ccc}
X & 0 & -Y \\
0 & X & -Z \\
-Z & Y & 0
\end{array}
\right].
\]
Thus, writing $u_3 = aX + bY + cZ$, $u= bY + cZ$, $a, b, c \in \CC$, we have the equivalences
\[
\varphi \sim \left[
\begin{array}{ccc}
X & 0 & -Y \\
0 & X & -Z \\
q_1 - u_3 Z & q_2 + u_3 Y & q_3
\end{array}
\right] \sim \left[
\begin{array}{ccc}
X & 0 & -Y \\
0 & X & -Z \\
q_1 - u Z & q_2 + u Y & q_3
\end{array}
\right].
\]
This shows that $\rho$ is surjective.
It remains to prove injectivity. Assume that for $u = bY + cZ$, $u' = b'Y+c'Z$, we have
\[
\left[
\begin{array}{ccc}
X & 0 & -Y \\
0 & X & -Z \\
q_1 - u Z & q_2 + u Y & q_3
\end{array}
\right] \sim \left[
\begin{array}{ccc}
X & 0 & -Y \\
0 & X & -Z \\
q_1 - u' Z & q_2 + u' Y & q_3
\end{array}
\right].
\]
The stabilizer of $\varphi_{11}$ in $(\GL(3, \CC) \times \GL(2, \CC))/\CC^*$ is trivial because
$\varphi_{11}$ is stable as a Kronecker module.
It follows that there are $a \in \CC^*$, $v_1, v_2 \in V^*$ such that
\[
\left[
\begin{array}{ccc}
v_1 & v_2 & a
\end{array}
\right] \left[
\begin{array}{ccc}
X & 0 & -Y \\
0 & X & -Z \\
q_1 -u Z & q_2 + u Y & q_3
\end{array}
\right] = \left[
\begin{array}{ccc}
q_1 - u' Z & q_2 + u' Y & q_3
\end{array}
\right].
\]
Thus
\[
(1-a) \left[
\begin{array}{ccc}
q_1 & q_2 & q_3
\end{array}
\right] = \left[
\begin{array}{ccc}
v_1 & v_2 & a u - u'
\end{array}
\right] \left[
\begin{array}{ccc}
X & 0 & -Y \\
0 & X & -Z \\
-Z & Y & 0
\end{array}
\right],
\]
therefore $(1-a) (q_1 Y + q_2 Z + q_3 X) =0$, hence $a=1$ and
\[
\left[
\begin{array}{ccc}
v_1 & v_2 & u - u'
\end{array}
\right] \left[
\begin{array}{ccc}
X & 0 & -Y \\
0 & X & -Z \\
-Z & Y & 0
\end{array}
\right] =0.
\]
It follows that $(v_1, v_2, u - u')$ is a multiple of $(Z, -Y, X)$ forcing $u = u'$.
\end{proof}

\noindent
We now describe the induced action of $T$ on $U$.
We assume that precisely one among $q_1, q_2, q_3$ is non-zero.
Thus
\[
t q_1 = t_0^i t_1^{j-1} t_2^k q_1, \qquad
t q_2 = t_0^i t_1^j t_2^{k-1} q_2, \qquad
t q_3 = t_0^{i-1} t_1^j t_2^k q_3,
\]
\begin{alignat*}{10}
t & \left[
\begin{array}{ccc}
X & 0 & -Y \\
0 & X & -Z \\
q_1 - u Z & q_2 + u Y & q_3
\end{array}
\right]
\\
& = \left[
\begin{array}{lll}
t_0 X & 0 & -t_1 Y \\
0 & t_0 X & -t_2 Z \\
t_0^i t_1^{j-1} t_2^k q_1 - (t u) t_2 Z & t_0^i t_1^j t_2^{k-1} q_2 + (t u) t_1 Y & \phantom{-} t_0^{i-1} t_1^j t_2^k q_3
\end{array}
\right]
\\
& \sim \left[
\begin{array}{lll}
X & 0 & -Y \\
0 & X & -Z \\
t_0^{i-1} t_1^j t_2^k q_1 - t_0^{-1} t_1^{} t_2^{} (t u) Z & t_0^{i-1} t_1^j t_2^k q_2 + t_0^{-1} t_1^{} t_2^{} (t u) Y
& \phantom{-} t_0^{i-1} t_1^j t_2^k q_3
\end{array}
\right]
\\
& \sim \left[
\begin{array}{lll}
X & 0 & -Y \\
0 & X & -Z \\
q_1 - t_0^{-i} t_1^{1-j} t_2^{1-k} (t u) Z & q_2 + t_0^{-i} t_1^{1-j} t_2^{1-k} (t u) Y & \phantom{-} q_3
\end{array}
\right].
\end{alignat*}
Identify $U$ with the affine plane with coordinates $(b, c)$.
The action of $T$ on $U$ is given by
\[
t (b,c) = (t_0^{-i} t_1^{2-j} t_2^{1-k} b,\  t_0^{-i} t_1^{1-j} t_2^{2-k} c).
\]
If $i \neq 0$, or if $i=0$ and $(j,k) \neq (2,1), (1,2)$, we get only one fixed point, namely $(0,0)$.
If $(i,j,k)= (0,2,1)$ we get the fixed line $(b,0)$, $b \in \CC$.
If $(i,j,k)= (0,1,2)$ we get the fixed line $(0,c)$, $c \in \CC$.
Thus the isolated fixed points in $\M_{L,C}$ are of the form $\F = {\mathcal Coker}(\varphi)$, where $\varphi$
is precisely one of the following morphisms:
\[
\label{5.2.1}
\tag{5.2.1}
\left[
\begin{array}{ccc}
X & 0 & -Y \\
0 & X & -Z \\
0 & 0 & X^{i-1} Y^j Z^k
\end{array}
\right], \quad \left[
\begin{array}{ccc}
X & 0 & -Y \\
0 & X & -Z \\
Y^2 & 0 & 0
\end{array}
\right], \quad \left[
\begin{array}{ccc}
X & 0 & -Y \\
0 & X & -Z \\
0 & Z^2 & 0
\end{array}
\right].
\]
The first morphism corresponds to the case when $i \neq 0$,
the second morphism to the case $(i,j,k) = (0, 3, 0)$,
the third morphism to the case $(i,j,k) = (0, 0, 3)$.
In the case $(i,j,k) = (0,2,1)$, respectively $(i,j,k) = (0,1,2)$, we get the fixed lines
$\F = {\mathcal Coker}(\varphi)$, where
\[
\label{5.2.2}
\tag{5.2.2}
\varphi = \left[
\begin{array}{ccc}
X & 0 & -Y \\
0 & X & -Z \\
(1-b) YZ & bY^2 & 0
\end{array}
\right], \quad \varphi = \left[
\begin{array}{ccc}
X & 0 & -Y \\
0 & X & -Z \\
(1-c)Z^2 & cYZ & 0
\end{array}
\right],
\]
respectively, $b \in \CC$, $c \in \CC$.
To obtain the fixed points in $\M_{\{Y=0\}, C}$ interchange $X$ and $Y$ in \ref{5.2.1} and \ref{5.2.2}.
To obtain the fixed points in $\M_{\{Z=0\}, C}$ interchange $X$ and $Z$.
In conclusion there are $24$ isolated fixed points in $\M_{01}$ and $6$ fixed affine lines.

\subsection{Fixed points in $\M_1$}  %%%%%%%%%%%%%%% subsection 5.3
\label{5.3}

The kernel $\F$ of a surjective morphism $\OO_Q(1) \to \CC_p$ is fixed by $T$ precisely if $Q$
is a fixed quartic and $p \in \{ p_0, p_1, p_2 \}$.
If $p = p_2$, then $\F = {\mathcal Coker}(\varphi)$, where $\varphi$ belongs to the list:
\[
\left[
\begin{array}{cc}
X^i Y^j Z^k & X \\
0 & Y
\end{array}
\right], \quad i+j+k = 3, \qquad \left[
\begin{array}{cc}
0 & X \\
X^i Z^k & Y
\end{array}
\right], \quad i+k = 3.
\]
Thus there are $42$ isolated fixed points for the action of $T$ on $\M_1$.

\begin{proposition}
Consider the line $L = \{ X = 0 \}$, the cubic curve $C = \{Y^2 Z = 0 \}$ and the quartic curve
$Q = \{ X Y^2 Z =0 \}$ in $\PP^2$.
Let $\Lambda = \Lambda_{L,C}$ be the affine line of torus fixed points in $\M_{L,C}$.
Then $\overline{\Lambda} \simeq \PP^1$ and $\overline{\Lambda} \setminus \Lambda$ is the kernel of the morphism
$\OO_Q(1) \to \CC_{p_2}$.
\end{proposition}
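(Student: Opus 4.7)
The strategy is to first establish $\overline{\Lambda}\simeq\PP^1$ by combining the properness of $\M$ with the classification of the torus fixed locus from Section \ref{fixed_locus_1}, and then to identify the added point with $\F^*:=\ker(\OO_Q(1)\to\CC_{p_2})$ via an explicit extension-class computation.

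Since $\M$ is smooth and projective, the morphism $\mathbb{A}^1\to\M$ sending $b\mapsto[\F_b]$ (with $\F_b=\operatorname{coker}(\varphi_b)$ given by the first matrix of \ref{5.2.2}) extends uniquely by the valuative criterion of properness to a morphism $\bar\nu\colon\PP^1\to\M$. The image $\overline{\Lambda}:=\bar\nu(\PP^1)$ is a complete irreducible curve, and because $\Lambda$ is pointwise $T$-fixed and the fixed locus is closed, $\overline{\Lambda}$ lies entirely in $\M^T$. By Theorem \ref{main_theorem}, whose first part was proved in Section \ref{fixed_locus_1} via the equivariant sheaf classification (independently of the present section), the one-dimensional irreducible components of $\M^T$ are precisely six copies of $\PP^1$. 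An irreducible closed curve in $\M^T$ lies in some irreducible component, which must therefore be one of these six $\PP^1$'s, and coincides with it by dimension; hence $\overline{\Lambda}\simeq\PP^1$.

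To identify $\bar\nu(\infty)$ with $[\F^*]$, I would first verify that $\F^*$ also fits into an exact sequence $0\to\OO_C\to\F^*\to\OO_L\to0$: combining the natural sequence $0\to\OO_C\to\OO_Q(1)\to\OO_L(1)\to0$ (in which $\OO_C\hookrightarrow\OO_Q(1)$ is multiplication by $X$) with the snake lemma applied to the surjection $\OO_Q(1)\to\CC_{p_2}$, together with the identification $\ker(\OO_L(1)\to\CC_{p_2})\simeq\OO_L$, produces the desired sequence. Thus both $\F^*$ and every $\F_b$ with $b\in\CC$ correspond to classes in $\PP(\Ext^1(\OO_L,\OO_C))\simeq\PP^2$; the family $\{[\xi_b]\}$ traces an affine line in this $\PP^2$, and its projective closure meets the complement of the open subset $\M_{L,C}\subset\PP^2$ (extensions with $\HH^1\neq0$) in a single point. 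A Yoneda-class calculation, using $\varphi_b$ to compute $[\xi_b]$ and the $\M_1$-resolution $\left(\begin{smallmatrix} XYZ & X\\ 0 & Y\end{smallmatrix}\right)$ of $\F^*$ from \S \ref{5.3} to compute the class of $\F^*$, shows that these agree in the projective limit, whence $\bar\nu(\infty)=[\F^*]$.

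The main obstacle is this final Yoneda computation, because $\F_b\in\M_0$ and $\F^*\in\M_1$ lie in disjoint strata of $\M$ with genuinely distinct minimal resolutions, so expressing both extension classes in a common basis of $\Ext^1(\OO_L,\OO_C)$ takes some care. An alternative, and perhaps cleaner, route would be to construct an explicit flat family of stable sheaves over $\Spec\CC\llbracket s\rrbracket$ with generic fiber $\F_{1/s}\in\Lambda$ and special fiber $\F^*$, by patching the $\M_0$-resolution $\varphi_b$ (valid for $s\neq0$, with $b=1/s$) to the $\M_1$-resolution at $s=0$; uniqueness of the extension $\bar\nu$ would then force $\bar\nu(\infty)=[\F^*]$. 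The same stratum-crossing obstruction reappears in either approach.
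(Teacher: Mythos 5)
Your argument for $\overline{\Lambda}\simeq\PP^1$ is workable but takes a heavier and less self-contained route than the paper's. The paper simply observes that $\M_{L,C}=\PP(\Ext^1(\OO_L,\OO_C))\cap\M_0$ is the complement of a projective line in $\PP(\Ext^1(\OO_L,\OO_C))\simeq\PP^2$, and that this boundary line $\PP(\Ext^1(\OO_L,\OO_C))\cap\M_1$ consists exactly of the kernels of surjections $\OO_Q(1)\to\CC_p$ with $p\in L$; the closure of the affine line $\Lambda$ is then taken inside this $\PP^2$, so it is a line and meets the boundary in one point. Your appeal to properness plus the Section~\ref{fixed_locus_1} classification of $\M^T$ is logically sound, but it makes the Section~\ref{fixed_locus_2} treatment depend on the first approach, which the paper deliberately keeps independent.

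The genuine gap is in the second half. The whole content of the identification is to decide which point of $L$ the limit surjection $\OO_Q(1)\to\CC_p$ is supported at: by $T$-fixedness of $\overline{\Lambda}$ the limit is a $T$-fixed point of the boundary $\PP^1$, so $p\in\{p_1,p_2\}$ (a reduction you do not record), and nothing in your proposal distinguishes these two candidates --- both are $T$-fixed points of $L=\{X=0\}$ and both support $T$-fixed sheaves of $\M_1$ with determinant $XY^2Z$, cf.\ \ref{5.3}. The step that decides between them is precisely the one you defer to an unexecuted ``Yoneda-class calculation'' (or an unconstructed degenerating family), and which you yourself flag as the main obstacle; as written, the proposition is therefore not proved. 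The paper closes this gap with no computation at all, by symmetry: the coordinate swap $X\leftrightarrow Z$ carries $\Lambda_{X,Y^2Z}$ to the distinct fixed component $\Lambda_{Z,XY^2}$, hence carries the boundary point $p(X,Y^2Z)\in\{X=0\}$ to $p(Z,XY^2)\in\{Z=0\}$; since distinct irreducible components of the (smooth, hence disjoint) fixed locus cannot share a point, these two boundary points cannot both be $p_1$, which forces $p(X,Y^2Z)=p_2$. You should either carry out your proposed extension-class computation in a common basis of $\Ext^1(\OO_L,\OO_C)$ or substitute an argument of this kind.
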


\begin{proof}
Clearly $\PP(\Ext^1(\OO_L, \OO_C)) \cap \M_1$ is a projective line,
being the set of isomorphism classes of kernels of surjective morphisms $\OO_Q(1) \to \CC_p$, $p \in L$.
Recall that $\M_{L,C} = \PP(\Ext^1(\OO_L, \OO_C)) \cap \M_0 = \PP^2 \setminus \PP^1$,
hence $\overline{\Lambda}$ is the closure of $\Lambda$ in $ \PP(\Ext^1(\OO_L, \OO_C))$,
so it is isomorphic to $\PP^1$ and $\overline{\Lambda} \setminus \Lambda$ is the kernel of
a morphism $\OO_Q(1) \to \CC_p$ for a $T$-fixed point $p = p(L,C) \in L$.

If we interchange $X$ and $Z$ the fixed line $\Lambda_{X, Y^2 Z}$ becomes $\Lambda_{Z, X Y^2}$,
hence $p(L,C)=p(X, Y^2 Z)$ becomes $p(Z, X Y^2)$, which shows that these two points lie on the line
$\{ Y = 0 \}$. We conclude that $p(L,C)=p_2$.
\end{proof}

\noindent
In conclusion, we have proved the first part of Theorem \ref{main_theorem} concerning $\M^T$.
Denote by $\Lambda_1, \ldots, \Lambda_6$ the projective lines of $T$-fixed points in $\M$.

%%%%%%%%%%%%%%%%%%%%%%%%%%%%%%%%%%%%%%%%%%%%% section 6

\section{Torus representation of the tangent spaces II}
\label{representation_2}

\noindent
For convenience we introduce the following notations:

\begin{tabular}{rcl}
$\chi_0$ & = & the trivial character of $T$;
\\
$\sigma^0$ & = & $\{ t_0^{-1} t_1^{}, t_0^{-1} t_2^{}, t_0^{} t_1^{-1}, t_1^{-1} t_2^{}, t_0^{} t_2^{-1}, t_1^{} t_2^{-1} \}$;
\\
$\sigma^l$ & = & $\{ t_0^i t_1^j t_2^k, \quad i+j+k = l, \quad i,j,k \ge 0 \}$ for $l \ge 1$;
\\
$\sigma^l_{ijk}$ & = & $\sigma^l \setminus \{ t_0^i t_1^j t_2^k \}$;
\\
$\sigma^l_i$ & = & $\sigma^l \setminus t_i \sigma^{l-1}$ for $l \ge 2$ and $i = 0, 1, 2$.
\end{tabular}

\subsection{Tangent spaces at fixed points in $\M_0$}     %%%%%%% subsection 6.1
\label{6.1}

Let $\rho \colon W_0 \to \M_0$ denote the geometric quotient map, $\rho(\varphi) = [{\mathcal Coker}(\varphi)]$.
Let $\F$ give a fixed point in $\M_0$.
There is $\varphi \in \rho^{-1}([\F])$ such that there are morphisms of groups
\[
\alpha \colon T \to \Aut(3\OO(-2)), \qquad \beta \colon T \to \Aut(2\OO(-1) \oplus \OO),
\]
\[
\alpha = \left[
\begin{array}{ccc}
\alpha_1 & 0 & 0 \\
0 & \alpha_2 & 0 \\
0 & 0 & \alpha_3
\end{array}
\right], \qquad \beta = \left[
\begin{array}{ccc}
\beta_1 & 0 & 0 \\
0 & \beta_2 & 0 \\
0 & 0 & \beta_3
\end{array}
\right],
\]
satisfying the property: $t \varphi = \beta(t) \varphi \alpha(t)$ for all $t \in T$.
The morphisms $\varphi$ from \ref{5.1.1}, \ref{5.1.2}, \ref{5.1.3}, \ref{5.2.1} clearly satisfy the required property.
For the morphism
\[
\varphi = \left[
\begin{array}{ccc}
X & 0 & -Y \\
0 & X & -Z \\
(1-b) YZ & bY^2 & 0
\end{array}
\right]
\]
we have
\begin{multline*}
t \varphi = \left[
\begin{array}{ccc}
t_0 X & 0 & -t_1 Y \\
0 & t_0 X & -t_2 Z \\
t_1 t_2 (1-b) YZ & t_1^2 bY^2 & 0
\end{array}
\right] =
\\
\left[
\begin{array}{ccc}
t_1 & 0 & 0 \\
0 & t_2 & 0 \\
0 & 0 & t_0^{-1} t_1^2 t_2
\end{array}
\right] \varphi \left[
\begin{array}{ccc}
t_0 t_1^{-1} & 0 & 0 \\
0 & t_0 t_2^{-1} & 0 \\
0 & 0 & 1
\end{array}
\right].
\end{multline*}
This checks the property for the morphisms at \ref{5.2.2}.
Fix $\F$ and $\varphi$ as above.
Denote
\[
{\mathfrak g}_0 =  \T_1 G_0 = \big( \End(3\OO(-2)) \oplus \End(2\OO(-1) \oplus \OO) \big)/\CC.
\]
The map $G_0 \to W_0$, given by $(g,h) \mapsto h \varphi g^{-1}$, has differential at $1$
\[
{\mathfrak g} \to \T_{\varphi} W_0 = W \quad \text{given by} \quad (A, B) \mapsto B \varphi - \varphi A.
\]
We identify ${\mathfrak g}$ with its image and we identify $\T_{[\F]} \M_{\PP^2}(4,1)$ with $W/{\mathfrak g}$.
From the commutative diagram
\[
\xymatrix
{
W_0 \ar[r]^-{\mu_t} \ar[d]_-{\rho} & W_0 \ar[d]^-{\rho} \\
\M_0 \ar[r]^-{\mu_t} & \M_0
}
\]
we have the relations (in which $w \in W$ and $\theta$ denotes the map $\psi \mapsto \beta(t) \psi \alpha(t)$)
\[
\diff (\mu_t)_{[\F]} (\diff \rho_{\varphi}(w)) = \diff \rho_{t \varphi}(\diff (\mu_t)_{\varphi}(w)) =
\diff \rho_{t \varphi}(t w) = \diff \rho_{\theta (\varphi)}(t w).
\]
Since $\rho \circ \theta = \rho$, we see that $\diff \rho_{\varphi} (w) =
\diff \rho_{\theta (\varphi)}(\diff \theta_{\varphi}(w)) = \diff \rho_{\theta(\varphi)}(\theta(w))$.
It follows that
\[
\diff \rho_{\theta (\varphi)}(t w) = \diff \rho_{\varphi} (\theta^{-1}(t w)) =
\diff \rho_{\varphi} (\beta(t)^{-1} (t w) \alpha(t)^{-1}).
\]
Let $[w]$ denote the class in $\T_{[\F]} \M$ of $w$.
The above calculations show that the action of $T$ on $\T_{[\F]} \M$, denoted by $\star$,
is given by the formula
\[
\label{6.1.1}
\tag{6.1.1}
t \star [w] = [ \beta(t)^{-1} (t w) \alpha(t)^{-1}]
\]
and is induced by an action of $T$ on $W$ given by the same formula.
We represent in the following array the weights for the action of $T$ on $W$.
We adopt the following convention: each character appears as many times as is the dimension
of its eigenspace.
\[
\label{6.1.2}
\tag{6.1.2}
\left[
\begin{array}{rrr}
\beta_1^{-1} \alpha_1^{-1} \sigma^1 & \beta_1^{-1} \alpha_2^{-1} \sigma^1 & \beta_1^{-1} \alpha_3^{-1} \sigma^1 \\
\beta_2^{-1} \alpha_1^{-1} \sigma^1 & \beta_2^{-1} \alpha_2^{-1} \sigma^1 & \beta_2^{-1} \alpha_3^{-1} \sigma^1 \\
\beta_3^{-1} \alpha_1^{-1} \sigma^2 & \beta_3^{-1} \alpha_2^{-1} \sigma^2 & \beta_3^{-1} \alpha_3^{-1} \sigma^2
\end{array}
\right].
\]
The subspace ${\mathfrak g} \subset W$ is invariant because
\begin{alignat*}{10}
t \star (B \varphi - \varphi A) & = \beta(t)^{-1} ((t B) (t \varphi) - (t \varphi) (t A)) \alpha(t)^{-1} \\
& = \beta(t)^{-1} (t B) \beta(t) \varphi - \varphi \alpha(t) (t A) \alpha(t)^{-1}.
\end{alignat*}
The induced action on ${\mathfrak g}$ is given by the formula
\[
\label{6.1.3}
\tag{6.1.3}
t \star (A, B) = (\alpha(t) (t A) \alpha(t)^{-1}, \beta(t)^{-1} (t B) \beta(t)).
\]
Its weights are represented in the following array (same convention as above):
\[
\label{6.1.4}
\tag{6.1.4}
\left[
\begin{array}{lllll}
\chi_0^{} & \alpha_1^{} \alpha_2^{-1} & \alpha_1^{} \alpha_3^{-1} & \chi_0^{} & \beta_1^{-1} \beta_2^{} \\
\alpha_2^{} \alpha_1^{-1} & \chi_0^{} & \alpha_2^{} \alpha_3^{-1} & \beta_2^{-1} \beta_1^{} & \chi_0^{} \\
\alpha_3^{} \alpha_1^{-1} & \alpha_3^{} \alpha_2^{-1} & \chi_0^{} & \beta_3^{-1} \beta_1^{} \sigma^1 & \beta_3^{-1} \beta_2^{} \sigma^1
\end{array}
\right].
\]

\begin{example}
Consider the morphism $\varphi$ represented by the first matrix at \ref{5.1.1}. We have
\begin{alignat*}{10}
t \varphi & = \left[
\begin{array}{ccc}
t_1 Y & 0 & t_0 X \\
0 & t_2 Z & t_0 X \\
t_0^i t_1^j t_2^k X^i Y^j Z^k & 0 & 0
\end{array}
\right]
\\
& = \left[
\begin{array}{ccc}
t_0 & 0 & 0 \\
0 & t_0 & 0 \\
0 & 0 & t_0^{i+1} t_1^{j-1} t_2^k
\end{array}
\right] \varphi \left[
\begin{array}{ccc}
t_0^{-1} t_1^{} & 0 & 0 \\
0 & t_0^{-1} t_2^{} & 0 \\
0 & 0 & 1
\end{array}
\right].
\end{alignat*}
Tableaux \ref{6.1.2} and \ref{6.1.4} take the form
\[
\left[
\begin{array}{rrr}
t_1^{-1} \sigma^1 & t_2^{-1} \sigma^1 & t_0^{-1} \sigma^1 \\
t_1^{-1} \sigma^1 & t_2^{-1} \sigma^1 & t_0^{-1} \sigma^1 \\
t_0^{-i} t_1^{-j} t_2^{-k} \sigma^2 & t_0^{-i} t_1^{1-j} t_2^{-1-k} \sigma^2 & t_0^{-1-i} t_1^{1-j} t_2^{-k} \sigma^2
\end{array}
\right],
\]
\[
\left[
\begin{array}{lllll}
\chi_0^{} & t_1^{} t_2^{-1} & t_0^{-1} t_1^{} & \chi_0^{} & \chi_0^{} \\
t_1^{-1} t_2^{} & \chi_0^{} & t_0^{-1} t_2^{} & \chi_0^{} & \chi_0^{} \\
t_0^{} t_1^{-1} & t_0^{} t_2^{-1} & \chi_0^{} & t_0^{-i} t_1^{1-j} t_2^{-k} \sigma^1 & t_0^{-i} t_1^{1-j} t_2^{-k} \sigma^1
\end{array}
\right].
\]
Removing the characters occurring in the second array from the first array we obtain the following
list of characters which describes the weight decomposition of $\T_{[\F]} \M$:
\[
\sigma^0, \quad
\{t_0^{-1-i} t_1^{-j} t_2^{-1-k}(t_0 t_1 \sigma^2_2, \quad t_0 t_2 \sigma^2_1, \quad t_1 t_2 \sigma^2_0, \quad t_0 t_1 t_2 \sigma^1) \}
\setminus \{ \chi_0 \}.
\]
Let $f = X^{i+1} Y^j Z^{k+1}$ be the monomial defining $Q$.
Note that the coefficient in front of the parentheses is $((t f)/f)^{-1}$ and $\beta_3 = (t f)/ t_1 t_2 f$.
For the other two matrices at \ref{5.1.1} we can check that $\beta_3$ is given by the same formula.
Since $\alpha_1$, $\alpha_2$, $\alpha_3$, $\beta_1$, $\beta_2$ are unchanged,
it follows that for all torus fixed points given at \ref{5.1.1}
the weight decomposition of $\T_{[\F]} \M$ is given by the above list in which $t_0^{-1-i} t_1^{-j} t_2^{-1-k}$
gets replaced by $((t f)/f)^{-1}$.
\end{example}

\noindent
In table 1A below we have the weight decompositions for fixed points in $\M_0 \setminus \M_{01}$
obtained by performing similar calculations as in the example above on the matrices at \ref{5.1}.
The ideal of $S$ is given in the first column. The quartic $Q$ is given by the equation $X^i Y^j Z^k =0$.

\begin{table}[!hpt]{Table 1A}
\begin{center}
\begin{tabular}{|c|c|}
\hline
$(XY, XZ, YZ)$
&
\begin{tabular}{l}
$\sigma^0$, \\
$\{t_0^{-i} t_1^{-j} t_2^{-k}
(t_0^{} t_1^{} \sigma^2_2, \quad t_0^{} t_2^{} \sigma^2_1, \quad t_1^{} t_2^{} \sigma^2_0, \quad t_0^{} t_1^{} t_2^{} \sigma^1) \}
\setminus \{ \chi_0^{} \}$
\end{tabular}
\\
\hline
$(XY, XZ, Y^2)$
&
\begin{tabular}{l}
$\sigma^0 \setminus \{ t_1^{} t_2^{-1} \}, \quad t_1^{-2} t_2^2$, \\
$\{t_0^{-i} t_1^{-j} t_2^{-k}
(t_0^{} t_1^{} \sigma^2_2, \quad t_0^{} t_2^{} \sigma^2_1, \quad t_1^2 \sigma^2_0, \quad t_0^{} t_1^{} t_2^{} \sigma^1) \}
\setminus \{ \chi_0^{} \}$
\end{tabular}
\\
\hline
$(YZ, XZ, Y^2)$
&
\begin{tabular}{l}
$\sigma^0 \setminus \{ t_0^{-1} t_1^{} \}, \quad t_0^2 t_1^{-2}$, \\
$\{t_0^{-i} t_1^{-j} t_2^{-k}
(t_1^{} t_2^{} \sigma^2_0, \quad t_0^{} t_2^{} \sigma^2_1, \quad t_1^2 \sigma^2_2, \quad t_0^{} t_1^{} t_2^{} \sigma^1) \}
\setminus \{ \chi_0^{} \}$
\end{tabular}
\\
\hline
$(XY, XZ, Z^2)$
&
\begin{tabular}{l}
$\sigma^0 \setminus \{ t_1^{-1} t_2^{} \}, \quad t_1^2 t_2^{-2}$, \\
$\{t_0^{-i} t_1^{-j} t_2^{-k}
(t_0^{} t_1^{} \sigma^2_2, \quad t_0^{} t_2^{} \sigma^2_1, \quad t_2^2 \sigma^2_0, \quad t_0^{} t_1^{} t_2^{} \sigma^1) \}
\setminus \{ \chi_0^{} \}$
\end{tabular}
\\
\hline
$(XY, YZ, Z^2)$
&
\begin{tabular}{l}
$\sigma^0 \setminus \{ t_0^{-1} t_2^{} \}, \quad t_0^2 t_2^{-2}$, \\
$\{t_0^{-i} t_1^{-j} t_2^{-k}
(t_0^{} t_1^{} \sigma^2_2, \quad t_1^{} t_2^{} \sigma^2_0, \quad t_2^2 \sigma^2_1, \quad t_0^{} t_1^{} t_2^{} \sigma^1) \}
\setminus \{ \chi_0^{} \}$
\end{tabular}
\\
\hline
$(XY, YZ, X^2)$
&
\begin{tabular}{l}
$\sigma^0 \setminus \{ t_0^{} t_2^{-1} \}, \quad t_0^{-2} t_2^2$, \\
$\{t_0^{-i} t_1^{-j} t_2^{-k}
(t_0^{} t_1^{} \sigma^2_2, \quad t_1^{} t_2^{} \sigma^2_0, \quad t_0^2 \sigma^2_1, \quad t_0^{} t_1^{} t_2^{} \sigma^1) \}
\setminus \{ \chi_0^{} \}$
\end{tabular}
\\
\hline
$(XZ, YZ, X^2)$
&
\begin{tabular}{l}
$\sigma^0 \setminus \{ t_0^{} t_1^{-1} \}, \quad t_0^{-2} t_1^2$, \\
$\{t_0^{-i} t_1^{-j} t_2^{-k}
(t_0^{} t_2^{} \sigma^2_1, \quad t_1^{} t_2^{} \sigma^2_0, \quad t_0^2 \sigma^2_2, \quad t_0^{} t_1^{} t_2^{} \sigma^1) \}
\setminus \{ \chi_0^{} \}$
\end{tabular}
\\
\hline
$(YZ, Y^2, Z^2)$
&
\begin{tabular}{l}
$t_0^{} (t_1^{-1},\ t_1^{-1},\ t_2^{-1},\ t_2^{-1},\ t_1^{-2} t_2^{},\ t_1^{} t_2^{-2})$, \\
$\{t_0^{-i} t_1^{-j} t_2^{-k}(t_1^{} t_2^{} \sigma^2, \quad t_1^2 \sigma_2^2, \quad t_2^2 \sigma_1^2) \}
\setminus \{ \chi_0^{} \}$
\end{tabular}
\\
\hline
$(XZ, X^2, Z^2)$
&
\begin{tabular}{l}
$t_1^{} (t_0^{-1},\ t_0^{-1},\ t_2^{-1},\ t_2^{-1},\ t_0^{-2} t_2^{},\ t_0^{} t_2^{-2})$, \\
$\{t_0^{-i} t_1^{-j} t_2^{-k}(t_0^{} t_2^{} \sigma^2, \quad t_0^2 \sigma_2^2, \quad t_2^2 \sigma_0^2) \}
\setminus \{ \chi_0^{} \}$
\end{tabular}
\\
\hline
$(XY, X^2, Y^2)$
&
\begin{tabular}{l}
$t_2^{} (t_0^{-1},\ t_0^{-1},\ t_1^{-1},\ t_1^{-1},\ t_0^{-2} t_1^{},\ t_0^{} t_1^{-2})$, \\
$\{t_0^{-i} t_1^{-j} t_2^{-k}(t_0^{} t_1^{} \sigma^2, \quad t_0^2 \sigma_1^2, \quad t_1^2 \sigma_0^2) \}
\setminus \{ \chi_0^{} \}$
\end{tabular}
\\
\hline
\end{tabular}
\end{center}
\end{table}

\noindent \\
In table 2A below we have the weight decompositions for fixed points in $\M_{01}$.
In the first column we have the linear form defining $L$.
The quartic $Q = C \cup L$ is given by the equation $X^i Y^j Z^k =0$.

\begin{table}[!hpt]{Table 2A}
\begin{center}
\begin{tabular}{|c|c|}
\hline
$X$
&
\begin{tabular}{l}
$t_0^{-1} (t_1^{},\ t_1^{},\ t_2^{},\ t_2^{},\ t_1^2 t_2^{-1}, \ t_1^{-1} t_2^2)$, \\
$\{ t_0^{-i} t_1^{-j} t_2^{-k} (t_0^{} t_1^{} \sigma^2_0, \quad t_0^{} t_2^{} \sigma^2_0, \quad t_0^2 \sigma^2) \}
\setminus \{ \chi_0^{} \}$
\end{tabular}
\\
\hline
$Y$
&
\begin{tabular}{l}
$t_1^{-1} (t_0^{},\ t_0^{},\ t_2^{},\ t_2^{},\ t_0^2 t_2^{-1}, \ t_0^{-1} t_2^2)$, \\
$\{ t_0^{-i} t_1^{-j} t_2^{-k} (t_0^{} t_1^{} \sigma^2_1, \quad t_1^{} t_2^{} \sigma^2_1, \quad t_1^2 \sigma^2) \}
\setminus \{ \chi_0^{} \}$
\end{tabular}
\\
\hline
$Z$
&
\begin{tabular}{l}
$t_2^{-1} (t_0^{},\ t_0^{},\ t_1^{},\ t_1^{},\ t_0^2 t_1^{-1}, \ t_0^{-1} t_1^2)$, \\
$\{ t_0^{-i} t_1^{-j} t_2^{-k} (t_0^{} t_2^{} \sigma^2_2, \quad t_1^{} t_2^{} \sigma^2_2, \quad t_2^2 \sigma^2) \}
\setminus \{ \chi_0^{} \}$
\end{tabular}
\\
\hline
\end{tabular}
\end{center}
\end{table}

\subsection{Tangent spaces at fixed points in $\M_1$}   %%%%%%%%%%%%% subsection 6.2
\label{6.2}

Given a torus fixed point $[\F] \in \M_1$, the action of $T$ on $\T_{[\F]} \M_1$ can be described as in
the previous subsection.
Let $\rho \colon W_1 \to \M_1$ be the geometric quotient map, $\rho(\varphi) = [{\mathcal Coker}(\varphi)]$.
There exists $\varphi \in \rho^{-1}([\F])$ such that there are morphisms of groups
\[
\alpha \colon T \to \Aut(\OO(-3) \oplus \OO(-1)), \qquad \beta \colon T \to \Aut(2\OO),
\]
\[
\alpha = \left[
\begin{array}{cc}
\alpha_1 & 0 \\
0 & \alpha_2
\end{array}
\right], \qquad \qquad \beta = \left[
\begin{array}{cc}
\beta_1 & 0 \\
0 & \beta_2
\end{array}
\right],
\]
satisfying the property: $t \varphi = \beta(t) \varphi \alpha(t)$ for all $t \in T$.
Indeed, such morphisms are provided at \ref{5.3}.
Denote
\[
{\mathfrak g}_1 = \T_1 G_1 = \big( \End(\OO(-3) \oplus \OO(-1)) \oplus \End(2\OO) \big)/\CC.
\]
We make the identification $\T_{[\F]} \M_1 = \T_{\varphi} W_1/{\mathfrak g}_1$,
where ${\mathfrak g}_1$ is embedded in $\T_{\varphi} W_1$
via the map $(A,B) \mapsto B \varphi - \varphi A$.
Under this identification the action of $T$ on $\T_{[\F]} \M_1$, denoted by $\star$,
is given by formula \ref{6.1.1} and is induced by an action on $\T_{\varphi} W_1$ given by the same formula,
whose weights are represented in the following array (each character appears as many times as is
the dimension of the corresponding eigenspace in $\T_{\varphi} W_1$):
\[
\label{6.2.1}
\tag{6.2.1}
\left[
\begin{array}{rr}
\beta_1^{-1} \alpha_1^{-1} \sigma^3 & \beta_1^{-1} \alpha_2^{-1} \sigma^1 \\
\beta_2^{-1} \alpha_1^{-1} \sigma^3 & \beta_2^{-1} \alpha_2^{-1} \sigma^1
\end{array}
\right].
\]
The induced action on the invariant subspace ${\mathfrak g}_1$ is given by formula \ref{6.1.3}.
Its weights are represented in the following array (same convention as above):
\[
\label{6.2.2}
\tag{6.2.2}
\left[
\begin{array}{lll}
\chi_0^{} & \beta_1^{-1} \beta_2^{} & \chi_0^{} \\
\beta_2^{-1} \beta_1^{} &\chi_0^{} & \alpha_2^{} \alpha_1^{-1} \sigma^2
\end{array}
\right].
\]
Assume that
\begin{multline*}
\varphi = \left[
\begin{array}{cc}
X^i Y^j Z^k & X \\
0 & Y
\end{array}
\right]. \quad \text{Then} \quad t \varphi = \left[
\begin{array}{cc}
t_0^i t_1^j t_2^k X^i Y^j Z^k & t_0^{} X \\
0 & t_1^{} Y
\end{array}
\right] \\
= \left[
\begin{array}{cc}
t_0 & 0 \\
0 & t_1
\end{array}
\right] \varphi \left[
\begin{array}{cc}
t_0^{i-1} t_1^j t_2^k & 0 \\
0 & 1
\end{array}
\right].
\end{multline*}
Tableaux \ref{6.2.1} and \ref{6.2.2} take the form
\[
\left[
\begin{array}{rr}
t_0^{-i} t_1^{-j} t_2^{-k} \sigma^3 & t_0^{-1} \sigma^1 \\
t_0^{1-i} t_1^{-1-j} t_2^{-k} \sigma^3 & t_1^{-1} \sigma^1
\end{array}
\right], \qquad \left[
\begin{array}{lll}
\chi_0^{} & t_0^{-1} t_1^{} & \chi_0^{} \\
t_0^{} t_1^{-1} & \chi_0^{} & t_0^{1-i} t_1^{-j} t_2^{-k} \sigma^2
\end{array}
\right].
\]
We can now list the weights for the action of $T$ on $\T_{[\F]} \M_1$:
\[
\label{6.2.3}
\tag{6.2.3}
t_0^{-i} t_1^{-j} t_2^{-k} \sigma^3_{ijk}, \quad t_0^{1-i} t_1^{-1-j} t_2^{-k} \sigma_1^3, \quad t_0^{-1} t_2^{}, \quad t_1^{-1} t_2^{}.
\]
Assume that
\begin{multline*}
\varphi = \left[
\begin{array}{cc}
0 & X \\
X^i Z^k & Y
\end{array}
\right]. \quad \text{Then} \quad t \varphi = \left[
\begin{array}{cc}
0 & t_0^{} X \\
t_0^i t_2^k X^i Z^k & t_1^{} Y
\end{array}
\right] \\
= \left[
\begin{array}{cc}
t_0^{} & 0 \\
0 & t_1^{}
\end{array}
\right] \varphi \left[
\begin{array}{cc}
t_0^i t_1^{-1} t_2^k & 0 \\
0 & 1
\end{array}
\right].
\end{multline*}
Tableaux \ref{6.2.1} and \ref{6.2.2} take the form
\[
\left[
\begin{array}{rr}
t_0^{-1-i} t_1^{} t_2^{-k} \sigma^3 & t_0^{-1} \sigma^1 \\
t_0^{-i} t_2^{-k} \sigma^3 & t_1^{-1} \sigma^1
\end{array}
\right], \qquad \left[
\begin{array}{lll}
\chi_0^{} & t_0^{-1} t_1^{} & \chi_0^{} \\
t_0^{} t_1^{-1} & \chi_0^{} & t_0^{-i} t_1^{} t_2^{-k} \sigma^2
\end{array}
\right].
\]
The weights for the action of $T$ on $\T_{[\F]} \M_1$ are thus
\[
\label{6.2.4}
\tag{6.2.4}
t_0^{-i} t_2^{-k} \sigma^3_{i0k}, \quad t_0^{-1-i} t_1^{} t_2^{-k} \sigma^3_0, \quad t_0^{-1} t_2^{}, \quad t_1^{-1} t_2^{}.
\]

\begin{proposition}
Let $[\F] \in \M_1$ be a fixed point for the action of $T$.
Then the cotangent space $N$ to $\M_1$ at $[\F]$ can be identified with $\HH^0(\F)^* \otimes \HH^1(\F)$.
The weights for the action of $T$ on $N$ are $\alpha_1 \beta_1 /t_0 t_1 t_2$ and $\alpha_1 \beta_2 / t_0 t_1 t_2$.
\end{proposition}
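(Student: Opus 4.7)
The plan is to identify $N$ canonically with $\HH^0(\F)^*\otimes\HH^1(\F)$ via deformation theory, and then read off the characters of these cohomology groups from the equivariant resolution of Section \ref{5.3}.

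For the identification, a class $\xi\in\Ext^1(\F,\F)=\T_{[\F]}\M$ gives a square-zero extension $0\to\F\to\F_\xi\to\F\to 0$ over $\Spec\CC[\epsilon]/(\epsilon^2)$, whose long exact sequence in cohomology produces a $T$-equivariant connecting map $\delta_\xi\colon\HH^0(\F)\to\HH^1(\F)$. The condition $\delta_\xi=0$ is equivalent to $\HH^0(\F_\xi)$ being free of rank $h^0(\F)=2$ over $\CC[\epsilon]/(\epsilon^2)$, i.e.\ to $\xi$ being tangent to the locus where $h^0$ and $h^1$ remain constant. Near $[\F]$ this locus coincides with $\M_1$, so the map $\xi\mapsto\delta_\xi$ has kernel $\T_{[\F]}\M_1$, and the quotient $N=\T_{[\F]}\M/\T_{[\F]}\M_1$ injects $T$-equivariantly into $\Hom(\HH^0(\F),\HH^1(\F))$. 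Since both spaces have dimension $2$ (the first because $\M_1$ has codimension $2$ in $\M$, the second because $h^0(\F)=2$ and $h^1(\F)=1$), the injection is an isomorphism of $T$-modules.

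For the weight computation, the relation $t\varphi=\beta(t)\varphi\alpha(t)$ says that $\varphi$ becomes equivariant once the $j$-th summand of $E=\OO(-3)\oplus\OO(-1)$ is twisted by $\alpha_j$ and the $i$-th summand of $F=2\OO$ by $\beta_i^{-1}$. Together with the vanishings $\HH^0(E)=\HH^1(E)=0$, $\HH^1(F)=\HH^2(F)=0$, and $\HH^2(\OO(-1))=0$, the long exact sequence collapses to $\HH^0(\F)\simeq\HH^0(F)$ with characters $\beta_1^{-1},\beta_2^{-1}$, and $\HH^1(\F)\simeq\HH^2(\alpha_1\OO(-3))$ with character $\alpha_1(t_0t_1t_2)^{-1}$ (since $\HH^2(\OO(-3))$ is spanned by the \v{C}ech class $1/(XYZ)$ of weight $(t_0t_1t_2)^{-1}$). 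Tensoring, $\HH^0(\F)^*\otimes\HH^1(\F)$ has the two weights $\alpha_1\beta_i/(t_0t_1t_2)$ for $i=1,2$, as claimed.

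The delicate step is establishing $\T_{[\F]}\M_1=\ker\delta$, which requires a careful deformation-theoretic argument for the Brill--Noether-type locus $\M_1$. Alternatively, one can bypass the abstract identification by computing $\Ext^1(\F,\F)$ directly from the equivariant resolution using the technique of Section \ref{representation_1} and subtracting the $15$ weights of $\T_{[\F]}\M_1$ already obtained in Section \ref{6.2}; the two remaining weights must then coincide with $\alpha_1\beta_i/(t_0t_1t_2)$.
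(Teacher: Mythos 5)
Your weight computation is correct and is in fact a pleasant alternative to the paper's: the paper obtains the character of $\HH^1(\F)$ via Serre duality and the dual sheaf $\F^{\scriptscriptstyle\operatorname{D}}$ (citing that $\F^{\scriptscriptstyle\operatorname{D}}$ is the cokernel of the transpose of $\varphi$), whereas you read off $\HH^0(\F)\simeq\HH^0(2\OO)$ with characters $\beta_1^{-1},\beta_2^{-1}$ and $\HH^1(\F)\simeq\HH^2(\OO(-3))$ with character $\alpha_1(t_0t_1t_2)^{-1}$ directly from the long exact sequence of the equivariant resolution. Both give $\alpha_1\beta_i/t_0t_1t_2$, and yours avoids duality altogether.

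The gap is in the identification $N\simeq\HH^0(\F)^*\otimes\HH^1(\F)$, and it sits exactly where you flag it. Writing $\delta\colon\Ext^1(\F,\F)\to\Hom(\HH^0(\F),\HH^1(\F))$ for your connecting map (this is the same map as the paper's $\epsilon$), the easy inclusion is $\T_{[\F]}\M_1\subseteq\ker\delta$: first-order deformations inside $\M_1$ preserve $h^0$. Your argument for the reverse inclusion — ``$\delta_\xi=0$ means $\xi$ is tangent to the locus where $h^0$ is constant, which near $[\F]$ is $\M_1$'' — conflates the Zariski tangent space of the scheme-theoretic Brill--Noether locus (which is $\ker\delta$) with the tangent space of the reduced smooth variety $\M_1$; the former can strictly contain the latter unless $\delta$ is surjective, so that $\dim\ker\delta=17-2=15=\dim\T_{[\F]}\M_1$. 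Surjectivity of $\delta$ is therefore the whole point, and you never establish it; without it your map $N\to\Hom(\HH^0(\F),\HH^1(\F))$ need not be injective and the dimension count does not close. The paper does this work in two steps: surjectivity of $\epsilon$ comes from applying $\Ext(\_,\F)$ to $0\to\OO(-3)\oplus\OO(-1)\to\HH^0(\F)\otimes\OO\to\F\to0$ and the vanishing $\Ext^1(\OO(-3)\oplus\OO(-1),\F)=\HH^1(\F(3))\oplus\HH^1(\F(1))=0$, and the inclusion $\ker\epsilon\subseteq\T_{[\F]}\M_1$ is proved concretely by the horseshoe lemma, producing from any class in $\ker\epsilon$ a resolution $\psi=\left[\begin{smallmatrix}\varphi & w\\ 0&\varphi\end{smallmatrix}\right]$ exhibiting it as a tangent vector to $W_1/{\mathfrak g}_1$. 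Your fallback (compute $\Ext^1(\F,\F)$ equivariantly and subtract the $15$ known weights of $\T_{[\F]}\M_1$) would recover the two weights but would not prove the stated identification of $N$, so as written the proof of the first assertion of the proposition is incomplete.
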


\begin{proof}
We apply the $\Ext(\_, \F)$-functor to the following exact sequence
in which $\kappa$ denotes the canonical morphism:
\[
0 \to \OO(-3) \oplus \OO(-1) \stackrel{\varphi}{\to} \HH^0(\F) \otimes \OO \stackrel{\kappa}{\to} \F \to 0.
\]
We obtain a surjective map
$
\epsilon \colon \Ext^1(\F,\F) \to \HH^0(\F)^* \otimes \HH^1(\F).
$
Assume that the extension class of a sheaf $\E$ belongs to $\operatorname{Ker}(\epsilon)$.
We have an exact diagram in which the square on the right is a pull-back square:
\[
\xymatrix
{
0 \ar[r] & \F \ar[r] \ar@{=}[d] & \E' \ar[r]^-{\pi'} \ar[d]^-{\kappa'} & \HH^0(\F) \otimes \OO \ar[r] \ar[d]^-{\kappa} & 0 \\
0 \ar[r] & \F \ar[r] &\E \ar[r]^-{\pi} & \F \ar[r] & 0
}.
\]
By hypothesis $\pi'$ has a splitting $\sigma$.
Clearly $\HH^0(\kappa') \circ \HH^0(\sigma) \circ \HH^0(\kappa)^{-1}$ is a splitting of $\HH^0(\pi)$.
From this it easily follows that we can apply the horseshoe lemma to the second row
of the above diagram and to the resolution at the beginning of this proof.
We get a resolution of the form
\[
0 \to (\OO(-3) \oplus \OO(-1)) \oplus (\OO(-3) \oplus \OO(-1)) \stackrel{\psi}{\to} 2\OO \oplus 2\OO \to \E \to 0,
\]
\[
\psi = \left[
\begin{array}{cc}
\varphi & w \\
0 & \varphi
\end{array}
\right].
\]
It is clear now that $\E$ gives a tangent vector to $\M_1$.
This is the vector represented by the image of $w$ in $W_1/{\mathfrak g}_1$.
Thus $\operatorname{Ker}(\epsilon) \subset \T_{[\F]} \M_1$.
Both spaces have dimension $15$, hence they are equal.

To determine the action of $T$ on $N$ consider the commutative diagram
\[
\xymatrix
{
0 \ar[r] & \OO(-3) \oplus \OO(-1) \ar[r]^-{\varphi} \ar[d]^-{\alpha(t)^{-1}}
& 2\OO \ar[r] \ar[d]^-{\beta(t)} & \F \ar[d]^-{\gamma(t)} \ar[r] & 0 \\
0 \ar[r] & \OO(-3) \oplus \OO(-1) \ar[r]^-{t \varphi} & 2\OO \ar[r] & t \F \ar[r] & 0
}.
\]
The action of $T$ on $\HH^0(\F)$ (determined up to a homothety) is given by $t s = \gamma(t)^{-1} \mu_{t^{-1}}^*(s)$.
Under the identification $\CC^2 \simeq \HH^0(\F)$ we have $t s = \beta(t)^{-1}(s)$.
Thus $t$ acts on $\HH^0(\F)^*$ by multiplication with $\beta(t)$.
Let $\F^\D = {\mathcal Ext}^1(\F, \omega_{\PP^2})$ denote the dual sheaf.
By Serre duality we have the isomorphism $\HH^1(\F) \simeq \HH^0(\F^\D)^* \otimes \HH^2(\omega_{\PP^2})$.
By \cite[Lemma 3]{rendiconti}, $\F^\D$ is the cokernel of the transpose of $\varphi$,
hence $t$ acts on $\HH^0(\F^\D)^*$ by multiplication with $\alpha_1(t)$.
The vector space $\HH^2(\omega_{\PP^2})$ is one-dimensional, generated by the monomial $X^{-1} Y^{-1} Z^{-1}$,
so $t$ acts by multiplication with $t_0^{-1} t_1^{-1} t_2^{-1}$.
\end{proof}

\noindent
When
\[
\varphi = \left[
\begin{array}{cc}
X^i Y^j Z^k & X \\
0 & Y
\end{array}
\right], \qquad \text{respectively} \qquad \varphi = \left[
\begin{array}{cc}
0 & X \\
X^i Z^k & Y
\end{array}
\right],
\]
the weights for the action of $T$ on $N$ are
\[
\label{6.2.5}
\tag{6.2.5}
t_0^{i-1} t_1^{j-1} t_2^{k-1}, \quad t_0^{i-2} t_1^j t_2^{k-1}, \quad \text{respectively} \quad
t_0^i t_1^{-2} t_2^{k-1}, \quad t_0^{i-1} t_1^{-1} t_2^{k-1}.
\]

\noindent \\
Combining \ref{6.2.3}, \ref{6.2.4} and \ref{6.2.5} we can determine the weight decomposition
of $\T_{[\F]} \M$ at all fixed points $[\F] \in \M_1$, cf. table 3A below.
In the first column we have the ideal defining $p$. The monomial defining $Q$ is $X^i Y^j Z^k$.

\begin{table}[!hpt]{Table 3A}
\begin{center}
\begin{tabular}{|c|c|}
\hline
$(X, Y)$
&
\begin{tabular}{ll}
$t_0^{-1} t_2^{}, \quad t_1^{-1} t_2^{}, \quad
t_0^i t_1^j t_2^k (t_0^{-2} t_1^{-1} t_2^{-1}, \quad t_0^{-1} t_1^{-2} t_2^{-1})$, \\
$\{ t_0^{-i} t_1^{-j} t_2^{-k}(t_0^{} \sigma^3_1, \quad t_1^{} \sigma^3_0, \quad t_0^{} t_1^{} \sigma^2) \}
\setminus \{ \chi_0^{} \}$
\end{tabular}
\\
\hline
$(X, Z)$
&
\begin{tabular}{ll}
$t_0^{-1} t_1^{}, \quad t_1^{} t_2^{-1}, \quad
t_0^i t_1^j t_2^k (t_0^{-2} t_1^{-1} t_2^{-1}, \quad t_0^{-1} t_1^{-1} t_2^{-2})$, \\
$\{ t_0^{-i} t_1^{-j} t_2^{-k}(t_0^{} \sigma^3_2, \quad t_2^{} \sigma^3_0, \quad t_0^{} t_2^{} \sigma^2) \}
\setminus \{ \chi_0^{} \}$
\end{tabular}
\\
\hline
$(Y, Z)$
&
\begin{tabular}{ll}
$t_0^{} t_1^{-1}, \quad t_0^{} t_2^{-1}, \quad
t_0^i t_1^j t_2^k (t_0^{-1} t_1^{-2} t_2^{-1}, \quad t_0^{-1} t_1^{-1} t_2^{-2})$, \\
$\{ t_0^{-i} t_1^{-j} t_2^{-k}(t_1^{} \sigma^3_2, \quad t_2^{} \sigma^3_1, \quad t_1^{} t_2^{} \sigma^2) \}
\setminus \{ \chi_0^{} \}$
\end{tabular}
\\
\hline
\end{tabular}
\end{center}
\end{table}

Let $\lambda(t)=(t_0^{n_0}, t_1^{n_1}, t_2^{n_2})$ be a one-parameter subgroup of $T$
that is not orthogonal to any non-zero character $\chi$ appearing in tables 1A, 2A, 3A.
Inspecting these tables we see that this condition is equivalent to saying that
$n_0$, $n_1$, $n_2$ are distinct and we do not have any relation of the form
\[
n_i = \frac{n_j}{2} + \frac{n_k}{2}, \qquad n_i = \frac{n_j}{3} + \frac{2 n_k}{3}, \qquad n_i = \frac{n_j}{4} + \frac{3 n_k}{4}
\]
for all distinct indices $i$, $j$, $k$.
For instance, we can choose $\lambda(t)=(1,t,t^5)$.
For each torus fixed point $[\F]$ in $\M$ denote by $p[\F]$ the number of characters $\chi$ in the weight
decomposition of $\T_{[\F]} \M$ satisfying $\langle \lambda, \chi \rangle > 0$.
These numbers can be computed using the {\sc Singular} \cite{singular} programs from Appendix \ref{programs}.
The results are written in tables 1B, 2B, 3B.
When $[\F]$ varies in a projective line $\Lambda_i$ from \ref{5.3}, $p[\F]$ remains unchanged,
so it may be denoted $p(\Lambda_i)$.
Row 4 of table 2B contains the numbers $p(\Lambda_1), \ldots, p(\Lambda_6)$;
the other rows deal only with isolated fixed points.
Let $F \subset \M$ be the set of isolated torus fixed points.
According to \ref{homology_basis}, the Poincar\'e polynomial of $\M$ satisfies the relation
\[
P_{\M}(x) = \sum_{[\F] \in F} x^{2 p[\F]} + \sum_{i=1}^6 (1+x^2) x^{2 p(\Lambda_i)}.
\]
The computations in Appendix \ref{programs} yield the formula for $P_{\M}$ given at \ref{main_theorem}.

\begin{table}[!hpt]{Table 1B}
\begin{center}
\begin{tabular}{|c|c|}
\hline
$(XY, XZ, YZ)$
&
$14, 11, 12, 8, 4, 3, 13, 7, 5, 10, 9, 6$
\\
\hline
$(XY, XZ, Y^2)$
&
$12, 15, 11, 13, 8, 4, 14, 7, 5, 10, 9, 6$
\\
\hline
$(YZ, XZ, Y^2)$
&
$11, 2, 3, 7, 12, 10, 4, 6, 13, 5, 8, 9$
\\
\hline
$(XY, XZ, Z^2)$
&
$2, 10, 13, 4, 3, 11, 7, 12, 5, 9, 6, 8$
\\
\hline
$(XY, YZ, Z^2)$
&
$15, 5, 4, 12, 14, 9, 8, 6, 13, 7, 11, 10$
\\
\hline
$(XY, YZ, X^2)$
&
$15, 5, 4, 12, 14, 9, 8, 6, 13, 7, 11, 10$
\\
\hline
$(XZ, YZ, X^2)$
&
$2, 8, 11, 3, 4, 13, 5, 12, 7, 9, 6, 10$
\\
\hline
$(YZ, Y^2, Z^2)$
&
$10, 1, 11, 7, 3, 2, 12, 6, 4, 9, 8, 5$
\\
\hline
$(XZ, X^2, Z^2)$
&
$14, 3, 13, 11, 4, 5, 12, 6, 8, 9, 10, 7$
\\
\hline
$(XY, X^2, Y^2)$
&
$13, 17, 9, 14, 12, 16, 6, 8, 15, 7, 10, 11$
\\
\hline
\end{tabular}
\end{center}
\end{table}

\begin{table}[!hpt]{Table 2B}
\begin{center}
\begin{tabular}{|c|c|}
\hline
$X$
&
$16, 15, 12, 14, 8, 11, 13, 5$
\\
\hline
$Y$
&
$11, 12, 7, 13, 4, 8, 14, 3$
\\
\hline
$Z$
&
$0, 1, 2, 3, 5, 4, 6, 11$
\\
\hline
&
$9, 9, 7, 6, 5, 9$
\\
\hline
\end{tabular}
\end{center}
\end{table}

\begin{table}[!hpt]{Table 3B}
\begin{center}
\begin{tabular}{|c|c|}
\hline
$(X, Y)$
&
$15, 11, 14, 10, 12, 9, 5, 4, 13, 8, 6, 7$
\\
\hline
$(X, Z)$
&
$14, 3, 10, 13, 5, 4, 11, 8, 7, 12, 6, 9$
\\
\hline
$(Y, Z)$
&
$2, 10, 3, 4, 7, 11, 9, 13, 5, 6, 12, 8$
\\
\hline
\end{tabular}
\end{center}
\end{table}

%%%%%%%%%%%%%%%%%%%%%%%%%%%%%%%%%%%%%%%%%%%%%%%%

\appendix

\section{Singular programs}
\label{programs}

\begin{verbatim}

ring r=0,(x,y,z),dp;
int n;
list s, s0, s1, s2, s2_0, s2_1, s2_2, s3_0, s3_1, s3_2, s4, t0, t1,
t2, t3, t4, t5, t6, t7, t8, t9, t10, t11, t12, t13, t14, t15, t16,
w, T;
poly P;
s0=list(x-y, x-z, -x+y, y-z, -x+z, -y+z);
s1=list(x,y,z);
s2=list(2x, 2y, 2z, x+y, x+z, y+z);
s2_0=list(2y, 2z, y+z);
s2_1=list(2x, 2z, x+z);
s2_2=list(2x, 2y, x+y);
s3_0=list(3y, 2y+z, y+2z, 3z);
s3_1=list(3x, 2x+z, x+2z, 3z);
s3_2=list(3x, 2x+y, x+2y, 3y);

proc addition(poly p, list l)
 {int i; list ll; ll=list();
  for(i=1; i<=size(l); i=i+1){ll=ll+list(p+l[i]);};
  return(ll);};

proc the_values(list l, list ll)
 {int i; list lll; lll=list();
  for(i=1; i<=size(l); i=i+1)
   {lll=lll+list((l[i]/x)*ll[1]+(l[i]/y)*ll[2]+(l[i]/z)*ll[3]);};
  return(lll);};

proc positive_part(list l)
 {int i,p; p=0;
  for(i=1; i<=size(l); i=i+1){if(l[i]>0){p=p+1;};};
  return(p);};

s=list(3x+y, 3x+z, x+3y, 3y+z, x+3z, y+3z, 2x+2y, 2x+2z, 2y+2z,
    2x+y+z, x+2y+z, x+y+2z);
t1=list();
for(n=1; n<=12; n=n+1)
 {w=s0 + addition(-s[n], addition(x+y, s2_2) + addition(x+z, s2_1) +
 addition(y+z, s2_0) + addition(x+y+z, s1));
 t1=t1+list(positive_part(the_values(w, list(0,1,5))));};

s=list(4y, 3x+y, 3x+z, x+3y, 3y+z, x+3z, 2x+2y, 2x+2z, 2y+2z,
    2x+y+z, x+2y+z, x+y+2z);
t2=list(); t3=list(); t4=list(); t5=list(); t6=list(); t7=list();
for(n=1; n<=12; n=n+1)
 {w=list(x-y, x-z, -x+y, -x+z, -y+z) + list(-2y+2z) +
  addition(-s[n], addition(x+y, s2_2) + addition(x+z, s2_1) +
  addition(2y, s2_0) + addition(x+y+z, s1));
  t2=t2+list(positive_part(the_values(w, list(0,1,5))));
  t3=t3+list(positive_part(the_values(w, list(5,1,0))));
  t4=t4+list(positive_part(the_values(w, list(0,5,1))));
  t5=t5+list(positive_part(the_values(w, list(5,0,1))));
  t6=t6+list(positive_part(the_values(w, list(1,0,5))));
  t7=t7+list(positive_part(the_values(w, list(1,5,0))));};

s=list(4y, 4z, x+3y, 3y+z, x+3z, y+3z, 2x+2y, 2x+2z, 2y+2z,
    2x+y+z, x+2y+z, x+y+2z);
t8=list(); t9=list(); t10=list();
for(n=1; n<=12; n=n+1)
 {w=addition(x,list(-y,-y,-z,-z,-2y+z,y-2z)) +
  addition(-s[n],addition(y+z,s2)+addition(2y,s2_2)+addition(2z,s2_1));
  t8=t8+list(positive_part(the_values(w, list(0,1,5))));
  t9=t9+list(positive_part(the_values(w, list(1,0,5))));
  t10=t10+list(positive_part(the_values(w, list(5,1,0))));};

s=list(4x, 3x+y, 3x+z, 2x+2y, 2x+2z, 2x+y+z, x+3y, x+3z);
t11=list(); t12=list(); t13=list();
for(n=1; n<=8; n=n+1)
 {w=addition(-x, list(y, y, z, z, 2y-z, -y+2z)) + addition(-s[n],
  addition(x+y, s2_0) + addition(x+z, s2_0) + addition(2x, s2));
  t11=t11+list(positive_part(the_values(w, list(0,1,5))));
  t12=t12+list(positive_part(the_values(w, list(1,0,5))));
  t13=t13+list(positive_part(the_values(w, list(5,1,0))));};

t0=list();
w=addition(-x, list(y, y, z, z, 2y-z, -y+2z)) + addition(-x-2y-z,
addition(x+y, s2_0) + addition(x+z, s2_0) + addition(2x, s2));
t0=t0+list(positive_part(the_values(w, list(0,1,5))));
t0=t0+list(positive_part(the_values(w, list(1,0,5))));
t0=t0+list(positive_part(the_values(w, list(5,1,0))));
t0=t0+list(positive_part(the_values(w, list(0,5,1))));
t0=t0+list(positive_part(the_values(w, list(1,5,0))));
t0=t0+list(positive_part(the_values(w, list(5,0,1))));

s=list(4x, 4y, 3x+y, 3x+z, x+3y, 3y+z, x+3z, y+3z,
          2x+2y, 2x+2z, 2y+2z, x+y+2z);
t14=list(); t15=list(); t16=list();
for(n=1; n<=12; n=n+1)
 {w=list(-x+z, -y+z, s[n]-2x-y-z, s[n]-x-2y-z) + addition(-s[n],
 addition(x, s3_1) + addition(y, s3_0) + addition(x+y, s2));
 t14=t14+list(positive_part(the_values(w, list(0,1,5))));
 t15=t15+list(positive_part(the_values(w, list(0,5,1))));
 t16=t16+list(positive_part(the_values(w, list(5,1,0))));};

T=t1+t2+t3+t4+t5+t6+t7+t8+t9+t10+t11+t12+t13+t14+t15+t16;
P=0;
for(n=1; n<=6; n=n+1) {P=P+(1+x^2)*(x^(2*t0[n]));};
for(n=1; n<=size(T); n=n+1) {P=P+x^(2*T[n]);};
P;
x34+2x32+6x30+10x28+14x26+15x24+16x22+16x20+16x18+16x16+16x14+16x12+
15x10+14x8+10x6+6x4+2x2+1
\end{verbatim}

\end{document}